\title{Sets resilient to erosion}
\date{June 28, 2008}
\author{Wesley Pegden\footnote{
Department of Mathematics, 
Rutgers University (New Brunswick), 
110 Frelinghuysen Rd.,
Piscataway, NJ 08854-8019. 
Email: pegden@math.rutgers.edu}
}
\newcommand{\comments}[1]{}
\renewcommand{\ldots}{\dots}
\newcommand{\hhh}{{\cal H}}
\newcommand{\bc}{\mathrm{bc}}
\newcommand{\sbs}{\subset}
\newcommand{\abs}[1]{\lvert #1 \rvert}
\newcommand{\st}{\,\vrule\,}
\newcommand{\Z}{\mathbb{Z}}
\newcommand{\R}{\mathbb{R}}
\newtheorem{theorem}{Theorem}[section]
\newtheorem{lemma}[theorem]{Lemma}
\newtheorem{observ}[theorem]{Observation}
\theoremstyle{definition}
\newtheorem{definition}[theorem]{Definition}
\newtheorem{question}[theorem]{Question}
\theoremstyle{remark}
   \newtheoremstyle{example}{\topsep}{\topsep}%
     {}
     {}
     {\bfseries}
     {}
     {\newline}
     {\thmname{#1}\thmnumber{ #2}\thmnote{ #3}}
   \theoremstyle{example}
   \newtheorem{example}[theorem]{Example}
\newcommand{\ep}{\varepsilon}
\newcommand{\stm}{\setminus}
\newcommand{\bound}{\partial}
\begin{document}
\maketitle
\begin{abstract}
  The \emph{erosion} of a set in Euclidean space by a radius $r>0$ is the subset of $X$ consisting of points at distance $\geq r$ from the complement of $X$.  A set is \emph{resilient} to erosion if it is similar to its erosion by some positive radius.  We give a somewhat surprising characterization of resilient sets, consisting in one part of simple geometric constraints on convex resilient sets, and, in another, a correspondence between nonconvex resilient sets and scale-invariant (\emph{e.g.}, `exact fractal') sets.  

\end{abstract}
\section{Introduction}

\begin{figure}[b]
\vspace{-.5cm}
  \begin{center}
    \includegraphics{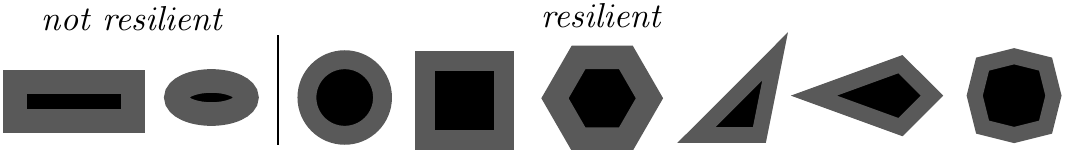}
  \end{center}
  \caption{Some erosions of bounded shapes in $\R^2$.  The area in gray is what is removed by an erosion operation.\label{polys}}
\end{figure}

Given a subset $X$ of $\R^n$, define the \emph{erosion} $e_r(X)$ of $X$ by the radius $r$ as the set of points of $X$ at distance $\geq r$ from the complement $X^C$ of $X$.  So we have
\begin{eqnarray}
&&  e_r(X)=X\stm \bigcup_{y\in X^C} B(r,y),\label{crX}
\end{eqnarray}
where $B(r,y)$ denotes an open ball of radius $r$ about $y$.

It turns out that this operation has been studied from a practical standpoint, as a model of pebble erosion.  For example, V\'arkonyi et al. \cite{vds} have studied the `typical' limit shapes under this and related operations to explain the distribution of pebble shapes found in in different kinds of natural environments (see also \cite{erop}).  We are interested in this operation because of a question of a more theoretical nature, namely:  \textbf{Given a set $X\sbs \R^n$ and a radius $r>0$, when is it true that $e_r(X)$ is equivalent to $X$ under a Euclidean similarity transformation?} 

When this is the case, we say that $X$ is \emph{resilient to erosion by the radius $r$}.  When this is true for at least one positive $r$, we say $X$ is \emph{resilient}.  We answer this question by giving a complete characterization of resilient sets.  In spite of the fact that the question is already quite interesting and natural in the 2-dimensional case, we will see that the characterization we give applies in any number of dimensions.

The characterization we give is a bit unusual in its form; in a certain sense, it is one part convex geometry, one part `fractal' geometry.   While we characterize convex resilient sets (we will see this includes all bounded resilient sets) with simple geometric constraints, the rest of the characterization is a correspondence between certain resilient sets and `scale-invariant' sets.  For example, Figure \ref{f.ckoch} shows a resilient set related to the Koch snowflake.  Notice that this set `gets bigger' when it is eroded.

\begin{figure}
  \psset{unit=3.3cm}

\begin{center}
\subfigure[\label{f.koch} A portion of an unbounded (and scale-invariant) version of Koch's curve.]{
\includegraphics{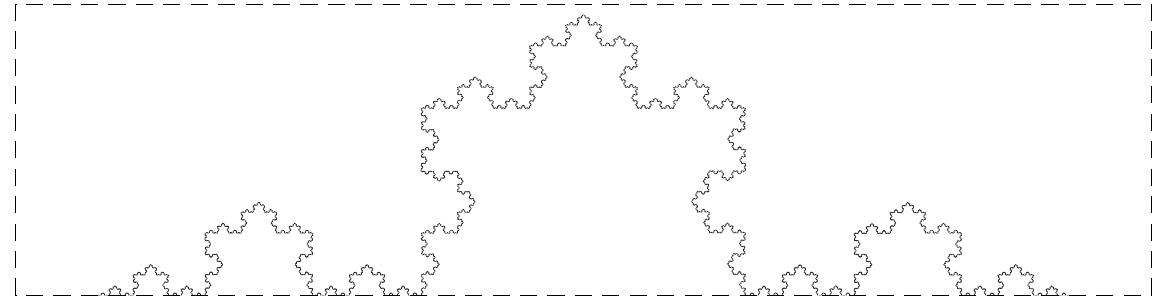}
}

\subfigure[\label{f.ckoch} Part of an unbounded resilient set derived from the unbounded version of Koch's snowflake.  The area removed by two successive erosion operations is shown in two different shades of gray.  Note that erosion makes it `bigger'.]{
\includegraphics{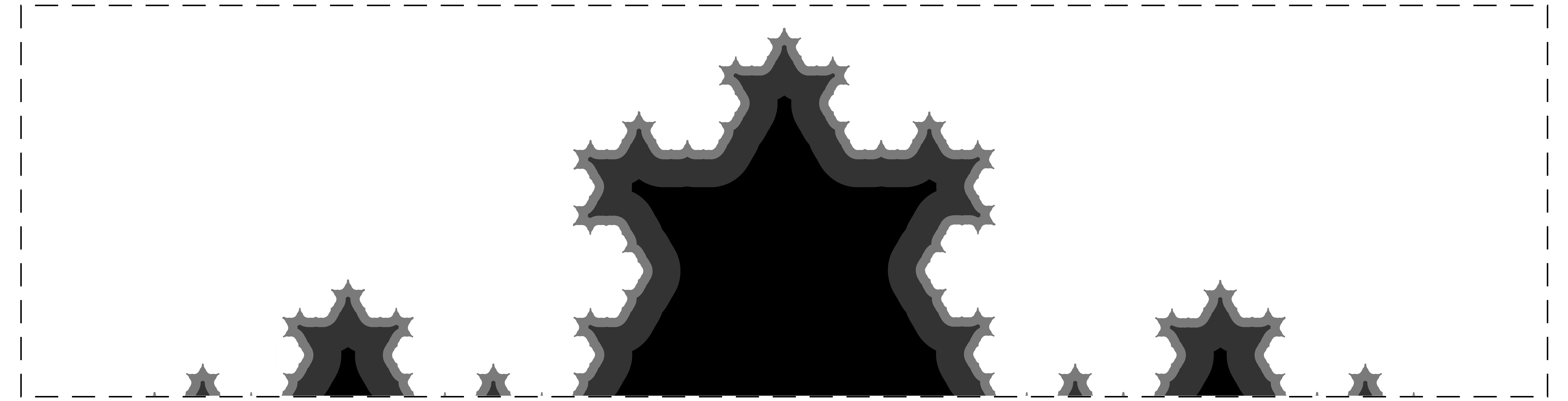}
}
\end{center}
  \caption{Getting a resilient set from the Koch snowflake.}  
\end{figure}


Before proceeding, lets consider some simple bounded examples from the plane.  It is easy to see that a closed ball of positive diameter is resilient, as is the body of a square---more generally, the body of any regular polygon.  (We mean `body' to include the boundary, so these are closed sets.)  One can also check that the body of any triangle is resilient, since it can be eroded to a smaller triangle with the same set of angles.  
On the other hand, any rectangle which is not a square is not resilient to erosion by any radius, since the ratio of the two side lengths will change under erosion.

We will see shortly that it is possible to precisely characterize all bounded resilient sets in simple geometric terms.  Notice first from line (\ref{crX}) that an erosion of \emph{any} set is always closed, thus all resilient sets are closed.  We will see that the bounded resilient sets of $\R^n$ are all convex, and are exactly those closed, convex bounded sets which have an inscribed ball, in a sense we will make precise in a moment.  For now notice that all the resilient polygons in Figure \ref{polys} have inscribed circles.

Call a closed, convex set a \emph{convex body}.  A closed half-space $H_x$ containing the convex body $X$ is a \emph{supporting half-space} at a point $x$ on the boundary of $X$ if $x$ also lies on the boundary of $H_x$.  A convex body in $\R^n$ is the intersection of all of its supporting half-spaces.  More is true: Call a point $x$ on the boundary of $X$ \emph{regular} if it has a unique supporting half-space $H_x$, and call a supporting half-space regular if it is supporting at at least one regular point of $X$.  Then in fact, a convex body $X\sbs \R^n$ is the intersection of its regular supporting half-spaces.  (See for example \cite{comgeom} as a reference.)   A \emph{supporting hyperplane} of $X$ is just the boundary of one of its supporting half-spaces, and similarly, a \emph{regular supporting hyperplane} is the boundary of one of its regular supporting half-spaces.  Notice that the only regular supporting lines of a polygon are those that coincide with a side of the polygon.  This suggests our definition of an inscribed ball of a convex body:

\begin{definition}
  A (closed) ball $B$ is \emph{inscribed} in the convex body $X$ if $B\sbs X$ and $B$ intersects all the regular supporting hyperplanes of $X$.
\label{inscribed}
\end{definition}
\noindent Notice that for polygons, Definition \ref{inscribed} coincides for the convex body of a polygon with the definition of an inscribed circle for the boundary of the polygon.  In 2 dimensions, Definition \ref{inscribed} requires that all of the `straight' parts of the boundary of the convex body be tangent to the inscribed ball, while all other parts of the boundary lie on the ball itself.  (See Figure \ref{example}.)  Our characterization of bounded resilient sets is given by the following theorem.

\begin{figure}[t]
  \begin{center}
    \includegraphics{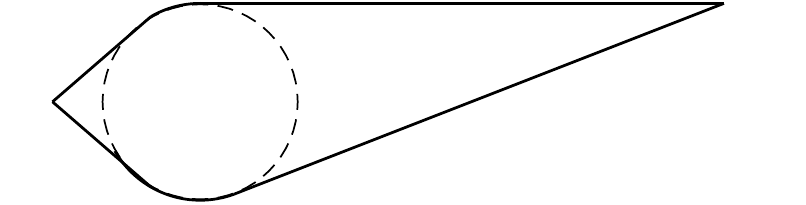}
  \end{center}
\caption{Another example of the boundary of a resilient set from $\R^2$.  The dashed circle is inscribed in the sense of Definition \ref{inscribed}\label{example}.}
\end{figure}

\begin{theorem}
  A bounded set $X\in \R^n$ is resilient to erosion by some radius $r>0$ if and only if it is a convex body with an inscribed ball of radius $>r$. 
\label{coll}
\end{theorem}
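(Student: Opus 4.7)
The proof splits into two implications with quite different flavors.

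\emph{If direction.} Given a convex body $X$ with inscribed ball $B(R,c)$ of radius $R > r$, I plan to use that $X$ is the intersection of its regular supporting half-spaces $H_u$ and that each bounding hyperplane $\partial H_u$ lies at distance exactly $R$ from $c$ (by the inscribed ball condition). Thus $e_r(X)$ is the intersection of the half-spaces $H_u$ each translated inward by $r$; rewriting the defining inequalities $u \cdot v \leq h_X(u) - r$ as $u \cdot (v - c) \leq R - r$ identifies $e_r(X)$ with the image of $X$ under the dilation $v \mapsto c + \tfrac{R-r}{R}(v-c)$, a similarity of ratio $(R-r)/R \in (0,1)$.

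\emph{Only if direction, preliminaries.} Suppose $X$ is bounded with $\phi(X) = e_r(X)$ for a similarity $\phi$. Since $\operatorname{diam}(e_r(X)) < \operatorname{diam}(X)$, the similarity ratio $\lambda$ of $\phi$ lies in $(0,1)$; then $\phi$ has a unique fixed point $c$, and the nested compacts $\phi^n(X)$ have diameters $\lambda^n \operatorname{diam}(X) \to 0$ and intersect in $\{c\}$, so $c \in X$. From $c \in e_r(X)$ I get $B(r,c) \subseteq X$, and bootstrapping via $B(\rho,c) \subseteq X \Rightarrow B(\lambda \rho, c) = \phi(B(\rho,c)) \subseteq e_r(X) \Rightarrow B(r + \lambda \rho, c) \subseteq X$ yields $B(R,c) \subseteq X$ with $R := r/(1-\lambda) > r$.

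\emph{Convexity (main obstacle) and inscribed ball.} The heart of the argument is that $X$ must be convex. The heuristic is that a nonconvex ``dent'' in $X$ enlarges additively under erosion (points within $r$ of the dent boundary are close to $X^C$ and get eroded away) but shrinks multiplicatively by $\lambda$ under $\phi$, which is incompatible with $\lambda < 1$. To formalize, I assume $X$ is not convex and pick $z \in \operatorname{conv}(X) \setminus X$ maximizing $\mu := \operatorname{dist}(z,X) > 0$. Since every $y \in e_r(X)$ satisfies $B(r,y) \subseteq X$, necessarily $|y - z| \geq \mu + r$, giving $\operatorname{dist}(z, e_r(X)) \geq \mu + r$. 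I plan to leverage this against the similarity identity $\operatorname{conv}(e_r(X)) = \phi(\operatorname{conv}(X))$ by establishing the commutation $\operatorname{conv}(e_r(X)) = e_r(\operatorname{conv}(X))$ for our resilient $X$---which should follow from $X$ containing the thick ball $B(R,c)$, making $e_r(X)$ rich enough for convex hull and erosion to commute---so that $z$ itself lies in $\operatorname{conv}(e_r(X))$ (the inscribed ball keeping $z$ at distance $\geq r$ from $\partial \operatorname{conv}(X)$); then $d_H(e_r(X), \operatorname{conv}(e_r(X))) \geq \mu + r$, while by similarity this equals $\lambda \mu$, yielding $\lambda \mu \geq \mu + r$, a contradiction for $\lambda < 1$. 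With $X$ now convex, the equation $\phi(X) = e_r(X)$ translates via support functions into the functional equation $g(u) = \lambda g(\rho^\top u) + r$ on regular supporting directions $u$, where $g(u) := h_X(u) - u \cdot c$ and $\rho$ is the orthogonal part of $\phi$; iterating and using boundedness of $h_X$ gives $g(u) = R$ for every regular $u$, so $B(R,c)$ is tangent to every regular supporting hyperplane, confirming the inscribed ball condition of Definition~\ref{inscribed}.
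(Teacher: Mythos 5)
Your ``if'' direction and your preliminary bootstrapping of the ball $B(R,c)$ with $R=r/(1-\lambda)$ are correct and consistent with the paper, but both halves of the hard direction have genuine gaps. For convexity, your plan hinges on the commutation $\operatorname{conv}(e_r(X))=e_r(\operatorname{conv}(X))$ and on the deepest point $z$ lying in $\operatorname{conv}(e_r(X))$; neither is proved, and neither follows merely from $X$ containing a thick ball (for $X$ a large ball together with a distant isolated point the commutation fails badly, and for a ball with a small dimple excised near its surface the deepest point $z$ can be arbitrarily close to $\partial\operatorname{conv}(X)$, so it need not survive into $e_r(\operatorname{conv}(X))$). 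Your inequality $\operatorname{dist}(z,e_r(X))\geq\mu+r$ is correct, but it only bounds $d_H(e_r(X),\operatorname{conv}(e_r(X)))$ from below if $z\in\operatorname{conv}(e_r(X))$, which is exactly what is missing. The paper avoids all of this with the \emph{ball-convexity} $\bc(X)$ (Observation \ref{o.bcc} and Lemma \ref{l.convex}): $\bc$ increases by at least $r$ under erosion \emph{unconditionally}, scales by $\lambda$ under similarity, and is infinite iff a full-dimensional closed set is convex, so $\lambda<1$ forces convexity with no auxiliary claims. Your quantity $d_H(X,\operatorname{conv}(X))$ does not enjoy an unconditional additive increase under erosion, which is why you are driven to the unproven commutation.

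Second, your support-function finish assumes the functional equation $g(u)=\lambda g(\rho^\top u)+r$ at \emph{every} regular direction $u$ and iterates it backwards. At a given $u$ that equation says precisely that the supporting hyperplane of $X$ with normal $u$, translated inward by $r$, is still a supporting hyperplane of $e_r(X)$---i.e.\ that every regular supporting hyperplane of $X$ is parallel to one of $e_r(X)$. This is false for general convex bodies (for a sharp ``lens'', the inner parallel body retreats by more than $r$ in regular directions near the long axis), and the paper explicitly flags it as the step it cannot establish directly even for resilient $X$. What comes for free (Observation \ref{regtreg}) is only the relation $g(\rho v)=\lambda g(v)+r$ for regular $v$, i.e.\ the equation on $\rho(\mathrm{Reg}(X))$ rather than on all of $\mathrm{Reg}(X)$; forward iteration then yields only $g(\rho^n v)\to R$ along orbits, not $g(v)=R$, and since $g\geq R$ everywhere this is consistent with $g(v)>R$ at some regular $v$. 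The paper closes this gap with Observation \ref{o.dense} (the directions where the recursion applies are dense among all regular directions) followed by a separation argument: a regular supporting hyperplane at distance $>R+\ep$ from $c$ would be cut off from $X$ by a nearly parallel regular one at distance $\leq R+\ep$. You need some substitute for that step; boundedness of $h_X$ alone does not license the backward iteration.
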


\noindent \textbf{Proof idea (proof for polytopes):} Let $X$ be a bounded polytope in $\R^n$.  For the `if' direction of the theorem, notice that $e_r(X)$ is a polytope whose regular supporting hyperplanes are parallel to to the regular supporting hyperplanes of $X$ and, like regular supporting hyperplanes of $X$, are at constant distance from the center of the inscribed ball of $X$.  This implies that $e_r(X)$ is similar to $X$.

For the other direction, we have that $e_r(X)=\sigma(X)$ for a similarity transformation $\sigma$.  First note that $X$ must be convex; otherwise, $e_r(X)$ is not even a polytope.  So long as $X$ is convex, $e_r(X)$ is another polytope, and each regular supporting hyperplane $H$ of $e_r(X)$ is parallel to, and at distance $r$ from, a regular supporting hyperplane $\bar H$ of $X$.  Since $X$ and $e_r(X)$ are similar, they have the same finite number of sides, and so every regular supporting hyperplane of $X$ is such a $\bar H$.  Letting $p$ denote the fixed point of $\sigma$, we have $d(H,p)=d(\bar H,p)-r$ for all regular supporting hyperplanes $H$ of $e_r(X)$.   Therefore, for the lists of distances $\{d(H,p)\}$ and $\{d(\bar H,p)\}$ to be the same up to a scaling factor, we must have that they are both lists of constant distances; i.e., that the regular supporting hyperplanes of each lie at constant distances from $p$.

\vspace{1em}

The proof of Theorem \ref{coll} is the subject of Section \ref{s.bounded}.  The first hole in the above `proof idea' when applied to general bounded sets is showing that they must be convex.  The second (more annoying) snag is that when the number of regular supporting hyperplanes is not finite, it does not seem straightforward to argue that all regular supporting hyperplanes of $X$ must be parallel to regular supporting hyperplanes of $e_r(X)$ (in fact, we carry out the complete proof of Theorem \ref{coll} without showing this to be the case, though it is an immediate consequence of the Theorem).  This difficulty arises because of the prospect of similarity under nonhomothetic transformations (\emph{i.e.}, transformations that include nontrivial rotations).  Though it may seem intuitively natural that sets which are resilient must be so under homothetic transformations, we will see in Section \ref{s.nonconvex} that, without some assumptions like boundedness or convexity, there are in fact sets $X\sbs \R^n$ where $e_r(X)=\sigma(X)$, but only for transformations $\sigma$ which include an irrational rotation.


As we will see in Section \ref{s.nonconvex}, unbounded resilient sets may be nonconvex and quite complicated geometrically, and unlikely to satisfy any simple geometric characterization like that given in Theorem \ref{coll}. Nevertheless, we \emph{can} prove a simple geometric characterization of (possibly unbounded) resilient sets which \emph{are} convex; this is the subject of Theorems \ref{t.dcoll}, \ref{t.inccoll}, and \ref{t.isomcoll}.

Resilient sets which are nonconvex behave in a counterintuitive way: eroding them makes them `bigger', in that the corresponding similarity transformation is distance-increasing.  To complete our characterization, we will see in Section \ref{s.sic} that a set is resilient to erosion in this distance-increasing way if and only if it is the erosion of a scale-invariant set (one which is self-similar under non-isometric transformations), giving a surprising connection with `exact fractals'.

\vspace{1ex}

There is another operation closely related to the erosion operation, which we call the \emph{expansion} by a radius $r$, defined as
\begin{equation}
  E_r(X)=\bigcup_{x\in X}B(r,x).
  \label{ErX}
\end{equation}
Again $B(r,x)$ denotes an open ball, thus $E_r(X)$ is always an open set.  Notice that we have $e_r(X)=E_r(X^C)^C$, thus, in general, the erosion operation is equivalent to the expansion operation by taking complements.  Nevertheless, the family of \emph{convex} expansion-resilient sets is much less rich than that of convex erosion-resilient sets.  For example, it is not too hard to see that the only bounded expansion-resilient sets are open balls.  We give the characterization of all convex expansion-resilient sets in Section \ref{s.exp}.

\section{The bounded case}
\label{s.bounded}
We first show the `if' direction of Theorem \ref{coll}: as noted above, a convex body $X$ containing a ball $B$ of radius $R$ intersecting its regular supporting hyperplanes is determined up to similarity by the selection of the positions of the points on the ball where these intersections occur.  The important point is that, up to the scaling of the ball, these positions are unchanged by the erosion operation.

To make this precise, observe that the erosion $e_r(X)$ of  $X$ by a radius $r<R$ contains the ball $B'=e_r(B)$ of radius $R'=R-r$.   Also, note that $e_r(X)$ is the intersection of the erosions $e_r(H)$ of the regular supporting half-spaces $H$ of $X$. The erosion $e_r(H)$ by $r$ of any supporting half-space $H$ of $X$ tangent to $B$ will be tangent to $B'$, and the position of the point where the boundary of $(e_r(H))$ intersects  $B'$ is the same as the point where the boundary of $H$ intersects $B$, apart from the scaling of the ball, since the point where a half-space is tangent to a ball is determined by the orientation of the half-space.  Thus $e_r(X)$ is the intersection of some half-spaces which are the images of the regular supporting half-spaces of $X$ under some fixed dilation, and so $e_r(X)$ is similar to $X$ under that same dilation.

For the other direction, we will first work to show that if $X$ is resilient and bounded, then it must be convex.  (This is not the case if we drop the unbounded requirement.)  To do this, it suffices to show that if a nonconvex set undergoes erosion, some distance associated with it increases (making it impossible that it is similar to the original set under a distance-decreasing similarity transformation).  The associated `distance' we use is the \emph{ball-convexity}:

\begin{definition}
  The \emph{ball-convexity} $\bc(X)$ of a closed set $X\sbs \R^n$ is the supremum of radii $R$ for which the complement of $X$ is a union of open balls of radius $R$.
\end{definition}
\noindent The ball-convexity of a set provides a kind of measure of its convexity: note that any convex body $X$ satisfies $\bc(X)=\infty$, since the complement of any convex body can be written as the union of open half-spaces, which can be in turn written as the unions of balls of arbitrarily large radii.

What about the converse?  It is not true in general that a set with infinite ball-convexity is convex: for example, any closed set $X\sbs \R^2$ lying entirely on some line has infinite ball convexity, even though many such sets are not convex.  This the only kind of counterexample, however:
\begin{observ}
  A closed set $X\sbs \R^n$ which does not lie in any $(n-1)$-dimensional subspace is convex if and only if $\bc(X)=\infty$. 
\label{o.bcc}
\end{observ}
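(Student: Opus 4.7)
My plan is as follows. The direction ``$X$ convex $\Rightarrow \bc(X)=\infty$'' is already noted in the paragraph preceding the observation, so I focus on the converse. I suppose $\bc(X)=\infty$ and that $X$ is not contained in any $(n-1)$-dimensional subspace, and aim to show $X$ is convex by contradiction. So assume $X$ is not convex.

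Because $X$ is not contained in any hyperplane, $X$ contains $n+1$ affinely independent points, so $\mathrm{conv}(X)$ has nonempty interior; in particular $\mathrm{int}(\mathrm{conv}(X))$ is dense in $\mathrm{conv}(X)$. Since $X$ is not convex and $X^c$ is open, a short density argument lets me choose a point $z\in \mathrm{int}(\mathrm{conv}(X))\stm X$. By Carath\'eodory I write $z=\sum_{i=1}^{n+1}\lambda_i x_i$ with $\lambda_i>0$ and $x_i\in X$, and set $C=\sum_i\lambda_i|x_i-z|^2>0$.

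For each $R>0$, the hypothesis $\bc(X)=\infty$ furnishes an open ball $B(c_R,R)\sbs X^c$ containing $z$, so $|z-c_R|<R$ and $|x_i-c_R|\geq R$ for every $i$. The identity
\[
|z-c_R|^2 \;=\; \sum_i \lambda_i |x_i-c_R|^2 \;-\; \sum_i \lambda_i |x_i-z|^2
\]
(valid because $z=\sum_i\lambda_i x_i$), combined with these inequalities, gives $|z-c_R|^2\geq R^2-C$, so $|c_R-z|\to\infty$ as $R\to\infty$. Expanding $|x-c_R|^2\geq R^2$ about $z$ for an arbitrary $x\in X$ then yields, for $u_R=(c_R-z)/|c_R-z|$,
\[
\langle x-z,u_R\rangle \;\leq\; \frac{|x-z|^2+|z-c_R|^2-R^2}{2|c_R-z|} \;\leq\; \frac{|x-z|^2}{2|c_R-z|} \;\to\; 0.
\]
By compactness of the unit sphere I pass to a subsequence along which $u_R\to u$, and conclude $\langle x-z,u\rangle\leq 0$ for every $x\in X$.

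Consequently $X$, and hence $\mathrm{conv}(X)$, lies in the closed half-space through $z$ with outer normal $u$ --- but this would be a supporting hyperplane of $\mathrm{conv}(X)$ at the interior point $z$, which is absurd. The main obstacle is the quadratic estimate $|z-c_R|^2\geq R^2-C$: without the Carath\'eodory anchoring there is no mechanism forcing $c_R$ to recede to infinity in a consistent direction, and the argument that $\langle x-z,u_R\rangle\to 0$ collapses. The hypothesis that $X$ is not contained in any hyperplane enters only to permit choosing $z\in \mathrm{int}(\mathrm{conv}(X))$; if $X$ sat inside a hyperplane $L$, the argument would merely recover $X\sbs L$ as the ``supporting half-space,'' producing no contradiction, which is exactly why low-dimensional sets such as subsets of a line must be excluded.
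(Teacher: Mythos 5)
Your proof is correct, but it takes a genuinely different route from the paper's. Both arguments begin the same way: use the non-convexity to find a point of $\mathrm{conv}(X)\stm X$ with a small $X$-free ball around it, and use the non-degeneracy hypothesis to place that point in the interior of a full-dimensional convex hull of points of $X$. From there the paper is quantitative and direct: it takes the $n+1$ points $x_1,\dots,x_{n+1}$ spanning a nondegenerate simplex containing the bad point $p$, and bounds $\bc(X)$ explicitly by the largest circumradius $R$ of the $(n+1)$-point subsets of $\{p,x_1,\dots,x_{n+1}\}$ containing $p$, invoking (without proof) the geometric fact that any ball of radius $>R$ containing $p$ must contain some $x_i$. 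You instead run a soft limiting argument: the variance identity $\lvert z-c_R\rvert^2=\sum_i\lambda_i\lvert x_i-c_R\rvert^2-C$ forces the centers $c_R$ of the empty balls to recede to infinity, the expansion of $\lvert x-c_R\rvert^2\geq R^2$ shows $\langle x-z,u_R\rangle\to 0$ uniformly over bounded $x$, and a compactness/subsequence step produces a supporting half-space of $\mathrm{conv}(X)$ through the interior point $z$ --- a contradiction. What the paper's approach buys is an explicit finite upper bound on $\bc(X)$ in one step; what yours buys is a completely self-contained argument that avoids the unproved circumradius claim, at the cost of being non-quantitative. The minor points in your write-up (density of $\mathrm{int}(\mathrm{conv}(X))$ in $\mathrm{conv}(X)$, and Carath\'eodory possibly returning fewer than $n+1$ points with positive weights) are easily patched and do not affect correctness.
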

\begin{proof}
 As already pointed out, the `only if' direction is clear.  For the other direction,  assume $X$ is not convex.  So there are points $x_1,x_2\in X$ such that the line segment $(x_1,x_2)$ includes a point $q$ not in $X$.  Since $X$ is closed, in fact there is a ball $B(\ep,q)$ about the point $q$ which contains no points of $X$.

Since $X$ doesn't lie entirely in some $(n-1)$-dimensional subspace, we can choose additional points $x_3,x_4,\dots x_{n+1}\in X$ so that the convex hull of all the points $x_1,x_2,x_3,\dots,x_{n+1}$ is an $(n+1)$-simplex.  Now choose a point $p$ from $B(\ep,q)$ which lies inside the $(n+1)$-simplex (but not in $X$).

Let $R$ the maximum of the radii of the $n+1$ $(n-1)$-spheres determined by $(n+1)$-tuples from the set $\{p,x_1,x_2,\dots,x_{n+1}\}$.  Then we have that $\bc(X)\leq R$, since a ball of radius $>R$ cannot include the point $p$ without including one of the points $x_i\in X$.  (This situation for $n=2$ dimensions is shown in Figure \ref{f.bc}.)   In particular, $\bc(X)<\infty$, as desired.
\end{proof}

\begin{figure}
  \begin{center}
    \includegraphics{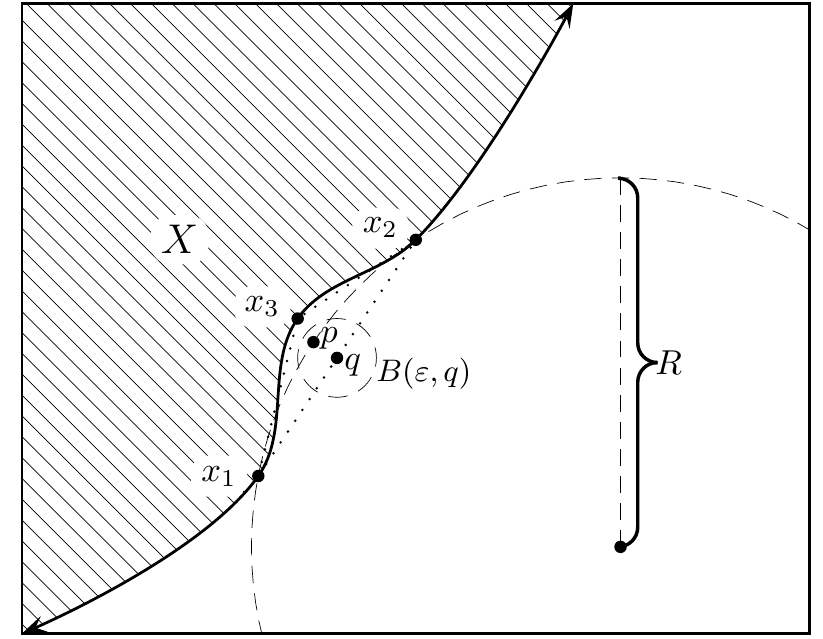}
  \end{center}
  \caption{Proving the `if' direction of Observation \ref{o.bcc} for the dimension $n=2$; the distance $R$ is a upper bound on the ball-convexity of $X$; balls of any larger radius cannot include the point $p$ without also including points of $X$. \label{f.bc}}
  
\end{figure}
\noindent Armed with the concept of ball convexity, it easy to show:
\begin{lemma}
  Let $e_r(X)=\sigma(X)$ for some $r>0$ and some similarity transformation $\sigma$.  Then $X$ must be convex, unless the transformation $\sigma$ is distance-increasing.
\label{l.convex}
\end{lemma}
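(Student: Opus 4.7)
The approach is to show that erosion raises ball-convexity by at least $r$, while similarities scale ball-convexity by the similarity ratio; combined, these force the ratio to exceed $1$ whenever $\bc(X)$ is finite.

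First I would establish the key inequality $\bc(e_r(X)) \geq \bc(X) + r$ for any closed set $X$. Starting from the description $e_r(X)^C = \{p : d(p,X^C) < r\} = \bigcup_{y \in X^C} B(r,y)$, if we fix any $R < \bc(X)$ and write $X^C = \bigcup_i B(R, y_i)$, then by the triangle-inequality identity $\bigcup_{y \in B(R,y_i)} B(r,y) = B(R+r, y_i)$ we obtain $e_r(X)^C = \bigcup_i B(R+r, y_i)$, exhibiting the complement as a union of open balls of radius $R+r$. Letting $R \to \bc(X)$ yields $\bc(e_r(X)) \geq \bc(X) + r$.

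Next I would note that $\bc(\sigma(X)) = \lambda\,\bc(X)$ for any similarity $\sigma$ of ratio $\lambda$: taking complements commutes with $\sigma$, and $\sigma$ sends open balls of radius $R$ to open balls of radius $\lambda R$, so a cover by radius-$R$ balls of $X^C$ corresponds exactly to a cover by radius-$\lambda R$ balls of $\sigma(X)^C$. Now if $e_r(X) = \sigma(X)$ and $\bc(X)$ is finite, combining the two facts gives $\lambda\,\bc(X) \geq \bc(X) + r$, hence $\lambda \geq 1 + r/\bc(X) > 1$ and $\sigma$ is distance-increasing.

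The remaining case is $\bc(X) = \infty$. Here Observation \ref{o.bcc} says either $X$ is convex (finishing the proof) or $X$ lies in some $(n{-}1)$-dimensional subspace. In the latter, every $x \in X$ has points of $X^C$ (namely points off the subspace) arbitrarily near it, so $d(x, X^C) = 0$ for all $x \in X$, forcing $e_r(X) = \emptyset$, hence $X = \sigma^{-1}(\emptyset) = \emptyset$, which is vacuously convex. The argument is almost entirely set-theoretic; the only technical point to be careful about is the triangle-inequality computation that bootstraps a radius-$R$ cover of $X^C$ into a radius-$(R+r)$ cover of $e_r(X)^C$, and this is routine.
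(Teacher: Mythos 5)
Your proof is correct and follows essentially the same route as the paper's: it rests on the key inequality $\bc(e_r(X))\geq \bc(X)+r$, the fact that a similarity of ratio $\lambda$ scales ball-convexity by $\lambda$, and Observation \ref{o.bcc}. In fact you supply details the paper leaves implicit, namely the explicit verification that a radius-$R$ ball cover of $X^C$ yields a radius-$(R+r)$ cover of $e_r(X)^C$, and the disposal of the degenerate case where $X$ lies in an $(n-1)$-dimensional subspace (where $e_r(X)=\emptyset$ forces $X=\emptyset$, which is vacuously convex).
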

\begin{proof}
  Since $X$ is resilient, it is closed and does not lie in any $(n-1)$-dimensional subspace.  Also, if $\bc(X)$ is finite, then we have that $\bc(e_r(X))\geq \bc(X)+r$.  Thus we are done by Observation \ref{o.bcc}, since for $\bc(X)$ to be finite, $\sigma$ would have to be distance-increasing.
\end{proof}
The similarity transformation corresponding to the erosion of a bounded resilient set is always distance-decreasing since, for example, the diameter decreases upon erosion.  Thus Lemma \ref{l.convex} implies that bounded resilient sets must be convex.  Since a convex set is the intersection of its supporting half-spaces (and even of just its regular ones), we can therefore study the erosion operation by examining its effect supporting half-spaces/hyperplanes.  

First note that the erosion of an intersection equals the intersection of the erosions: 
$
e_r(\bigcap A_\alpha)= \bigcap e_r(A_\alpha)
$
 for any family of subsets $A_\alpha\sbs \R^n$ (this is easily deduced from line (\ref{crX})).  Since a convex set $X$ is the intersection of its supporting half-spaces, we have for any regular supporting half-space $H$ of $X$ that $e_r(X)\sbs e_r(H)$.  Thus, if some point on the boundary of $e_r(X)$ is at distance $r$ from the boundary of $H$, we have that $e_r(H)$ is in fact a supporting half-space to $e_r(X)$ at the point $x$.  Of course, any point $x$ on the boundary of $e_r(X)$ must lie at distance $r$ from \emph{some} point $\bar x$ on the boundary of $X$; we see that if $x$ has a unique supporting half-space, then so must $\bar x$:



\begin{observ}
  If $X\sbs \R^n$ is closed and convex, then any regular point $x$ on the boundary of $e_r(X)$ is at distance $r$ from a (unique) $\bar x$ on the boundary of $X$ which is regular.  The unique supporting hyperplane at $x$ of $e_r(X)$ is a parallel to the unique supporting hyperplane of $X$ at $\bar x$.\qed
\label{regtreg}
\end{observ}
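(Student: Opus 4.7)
The plan rests on a single geometric fact: if $x\in e_r(X)$, then $B(r,x)\cap X^C=\emptyset$, and since $X$ is closed this gives $\overline{B(r,x)}\subseteq X$. I would proceed in three steps.

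First, for existence of $\bar x$: since $x\in\partial e_r(X)$, a short argument shows $d(x,X^C)=r$ exactly (if the distance were strictly larger, a small ball around $x$ would sit inside $e_r(X)$, contradicting $x$ being on the boundary). A nearest point $\bar x$ of $\overline{X^C}$ to $x$ exists by a standard compactness argument, and cannot lie in the open set $X^C$ itself, so $\bar x\in\partial X$. Clearly $\bar x\in\partial B(r,x)$.

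Next, regularity of $\bar x$ comes for free. Convexity of $X$ guarantees at least one supporting hyperplane at $\bar x$, and any such hyperplane must have $\overline{B(r,x)}\subseteq X$ in one of its closed half-spaces. But the only hyperplane through $\bar x$ with the ball $\overline{B(r,x)}$ on one side is the tangent hyperplane to $\partial B(r,x)$ at $\bar x$, namely the one perpendicular to $u:=(\bar x-x)/r$. Hence $X$ has a unique supporting hyperplane at $\bar x$, with outward normal $u$.

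Finally, uniqueness of $\bar x$ and the parallelism claim will both fall out of the same observation. Any candidate $\bar x_i$ at distance $r$ from $x$, with $u_i:=(\bar x_i-x)/r$, gives via the previous step a supporting half-space $H_i=\{z:(z-\bar x_i)\cdot u_i\leq 0\}$ of $X$. Using commutation of $e_r$ with intersection together with $e_r(H_i)=\{z:(z-x)\cdot u_i\leq 0\}$ yields $e_r(X)\subseteq e_r(H_i)$, exhibiting $e_r(H_i)$ as a supporting half-space of $e_r(X)$ at $x$ (its boundary passes through $x=\bar x_i-ru_i$). Regularity of $x$ forces the outward normal at $x$ to be unique, so $u_1=u_2$ and hence $\bar x_1=\bar x_2$; meanwhile the supporting hyperplane at $x$ has the same normal $u$ as the supporting hyperplane at $\bar x$, giving parallelism.

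The only step where one might slip is the last, and the difficulty is purely bookkeeping: keeping the direction of $u$ consistent between ``outward normal to $X$ at $\bar x$'' and ``direction from $x$ to $\bar x$'', and verifying that erosion of $H_i$ shifts the bounding hyperplane by exactly $r$ in the correct direction. I do not anticipate a genuine obstacle; everything is driven by the inscribed-ball fact $\overline{B(r,x)}\subseteq X$ and standard properties of supporting hyperplanes of convex bodies.
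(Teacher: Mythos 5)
Your argument is correct, and it follows essentially the same route the paper takes: the paper states this observation with no formal proof, relying on the preceding discussion that $e_r(X)\sbs e_r(H)$ for every supporting half-space $H$ of $X$ and that $e_r(H)$ supports $e_r(X)$ at $x$ whenever $\partial H$ is at distance $r$ from $x$, which is exactly your final step. Your write-up is in fact more complete than the paper's, since the tangency argument via $\overline{B(r,x)}\sbs X$ --- showing that the only possible supporting hyperplane of $X$ at $\bar x$ is the one tangent to the inscribed ball, whence $\bar x$ is regular --- is left implicit in the paper.
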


Theorem \ref{coll} will imply that any bounded resilient set $X$ is in fact resilient to erosion by all radii $r$ for which $e_r(X)$ has positive diameter. For the proof of the Theorem, we want to at least know that a bounded resilient set can be eroded to an arbitrarily small copy of itself.  First observe the following:
\begin{observ}
  For any similarity transformation $\sigma$ with scaling factor $\alpha$, we have  $\sigma(e_r(X))=e_{\alpha r}(\sigma(X))$.\qed
\label{o.commutes}
\end{observ}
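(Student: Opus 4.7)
The plan is to verify the identity directly from the definition of erosion, exploiting the key property of a similarity transformation $\sigma$ with scaling factor $\alpha$: namely, that $\sigma$ is a bijection of $\R^n$ satisfying $d(\sigma(x),\sigma(y))=\alpha\,d(x,y)$ for all $x,y$. From this, I would observe that $\sigma$ sends every open ball $B(r,y)$ to the open ball $B(\alpha r,\sigma(y))$, and that, being a bijection, $\sigma(X^C)=\sigma(X)^C$.

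Given these two observations, I would apply $\sigma$ to the union on the right-hand side of the defining equation (\ref{crX}):
\[
\sigma\!\left(\bigcup_{y\in X^C} B(r,y)\right)=\bigcup_{y\in X^C} B(\alpha r,\sigma(y))=\bigcup_{z\in \sigma(X)^C} B(\alpha r,z).
\]
Since $\sigma$ is a bijection, it also commutes with the set-difference $X\setminus(\cdot)$. Thus
\[
\sigma(e_r(X))=\sigma(X)\setminus \bigcup_{z\in \sigma(X)^C} B(\alpha r,z)=e_{\alpha r}(\sigma(X)),
\]
which is the desired identity.

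There is essentially no obstacle here; the only small point to be careful about is that we really do need $\sigma$ to be a bijection of $\R^n$ (not just an isometry of $X$) to pass from $\sigma(X^C)$ to $\sigma(X)^C$ and to commute $\sigma$ with the set-minus operation. Since similarity transformations of $\R^n$ are bijective by definition, this is automatic.
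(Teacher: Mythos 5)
Your proof is correct and is exactly the routine verification the paper has in mind when it states Observation \ref{o.commutes} with no proof (just \qed): a similarity sends $B(r,y)$ to $B(\alpha r,\sigma(y))$ and, being a bijection, commutes with complements and set differences, so the identity falls out of line (\ref{crX}). Nothing is missing.
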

\noindent  Observation \ref{o.commutes} captures the way in which the erosion of a resilient set is again resilient to erosion.  If $X$ is resilient to erosion by the radius $r$, then $e_r(X)=\sigma(X)$, so that Observation \ref{o.commutes} gives us that $e_{\alpha r}(e_{r}(X))=\sigma^2(X)$.  By the triangle inequality, $e_{\alpha r}(e_{r}(X))$ is just $e_{r+\alpha r}(X)$, so we get that in addition to being resilient to erosion by the radius $r$, $X$ is also resilient to erosion by the radius $r+\alpha r$, with corresponding similarity transformation $\sigma^2$.  Repeating this argument in the natural way gives us the following observation:
\begin{observ}
If $e_r(X)=\sigma(X)$ for some similarity transformation $\sigma$ with scaling factor $\alpha$,  then it is resilient to erosion by 
\begin{equation}
r_i=\sum\limits_{0\leq k<i} r\alpha^k  
\label{e.ps}
\end{equation}
for all $i$; the corresponding similarity transformation is $\sigma^i$.\qed
\label{sup}
\end{observ}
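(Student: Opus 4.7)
The plan is to prove this by induction on $i$, directly iterating the argument sketched in the paragraph preceding the observation. Two ingredients are needed: Observation \ref{o.commutes} in the form $\sigma(e_r(X))=e_{\alpha r}(\sigma(X))$, and the nesting identity
\[
e_s(e_t(X))=e_{s+t}(X)
\]
for any $s,t>0$, which is a consequence of the triangle inequality applied to distances from $X^C$.

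The base case $i=1$ is the hypothesis $e_{r_1}(X)=e_r(X)=\sigma(X)$. For the inductive step, assume $e_{r_i}(X)=\sigma^i(X)$. Applying $\sigma$ to both sides and commuting via Observation \ref{o.commutes} gives
\[
\sigma^{i+1}(X)=\sigma(e_{r_i}(X))=e_{\alpha r_i}(\sigma(X))=e_{\alpha r_i}(e_r(X)),
\]
where in the last step I have substituted $\sigma(X)=e_r(X)$. The nesting identity then collapses this to $e_{\alpha r_i+r}(X)$, and a one-line geometric-series check shows $\alpha r_i+r=r_{i+1}$, completing the induction.

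The only step that requires any thought is the nesting identity $e_s(e_t(X))=e_{s+t}(X)$. For the $\supseteq$ direction, a point $y$ at distance $\geq s+t$ from $X^C$ is automatically at distance $\geq s$ from $e_t(X)^C$, since $e_t(X)^C$ consists of $X^C$ together with points within distance $t$ of $X^C$. For the $\subseteq$ direction, given $y\in e_s(e_t(X))$ and any $z\in X^C$, the point on the segment from $z$ to $y$ at distance $t$ from $z$ lies in $e_t(X)^C$, so $y$ is at distance $\geq s$ from it, and hence at distance $\geq s+t$ from $z$. I do not anticipate any real obstacle here; the whole observation is essentially a bookkeeping iteration of the $i=2$ case already worked out in the text.
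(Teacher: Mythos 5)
Your proof is correct and takes essentially the same route as the paper, which establishes the $i=2$ case in the paragraph preceding the observation (via Observation \ref{o.commutes} together with the triangle-inequality identity $e_s(e_t(X))=e_{s+t}(X)$) and states that "repeating this argument in the natural way" gives the general claim; you have simply made that iteration an explicit induction. One small nitpick in your $\subseteq$ direction of the nesting identity: since the balls in line (\ref{crX}) are open, the point at distance exactly $t$ from $z$ need not lie in $e_t(X)^C$, so you should instead take the point at distance $t-\ep$ from $z$ and let $\ep\to 0$.
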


Our final piece of preparation concerns spherical subsets which are isomorphic to proper subsets of themselves:
\begin{observ}
  Let $A\sbs B$ be subsets of the sphere $S^{n-1}$, such that $\phi(A)=B$, where $\phi$ is some isometry of the sphere.  Then $A$ is dense in $B$.
\label{o.dense}
\end{observ}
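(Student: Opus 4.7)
The plan is to iterate $\phi$ and then exploit compactness of the isometry group of $S^{n-1}$. From $A \subseteq B = \phi(A)$, applying $\phi$ inductively yields the ascending chain $A \subseteq \phi(A) \subseteq \phi^2(A) \subseteq \cdots$, so $B = \phi(A) \subseteq \phi^m(A)$ for every $m \geq 1$, equivalently $\phi^{-m}(B) \subseteq A$. Thus for every $x \in B$ and every $m \geq 1$ we already have $\phi^{-m}(x) \in A$; it therefore suffices to produce, for each $x \in B$, a sequence of positive integers $m$ with $\phi^{-m}(x) \to x$.

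For this I would invoke compactness: the isometry group of $S^{n-1}$ is $O(n)$, which is compact in the uniform metric $d_\infty$. Hence the sequence $\phi, \phi^2, \phi^3, \dots$ has a Cauchy subsequence $\phi^{n_k}$, so given any $\epsilon > 0$ we can pick $k < l$ with $d_\infty(\phi^{n_k}, \phi^{n_l}) < \epsilon$. Substituting $x = \phi^{-n_k}(y)$ this becomes $\sup_{y} d(y, \phi^{n_l - n_k}(y)) < \epsilon$, and since $\phi^m$ and $\phi^{-m}$ move each point by the same distance, also $d_\infty(\mathrm{id}, \phi^{-(n_l - n_k)}) < \epsilon$. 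So $m := n_l - n_k$ is a positive integer for which $\phi^{-m}$ is uniformly $\epsilon$-close to the identity on $S^{n-1}$; letting $\epsilon \to 0$ gives the desired approximating sequence $\phi^{-m}(x) \in A$ converging to $x$.

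There is no serious obstacle in this argument: once the compactness of $O(n)$ is identified, everything else is a formal consequence of the inclusion $A \subseteq \phi(A)$ together with the elementary observation that the cyclic subgroup $\langle \phi \rangle$ must accumulate at the identity along positive powers.
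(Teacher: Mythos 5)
Your proof is correct and follows essentially the same route as the paper's: both arguments reduce the claim to showing that the backward orbit $\phi^{-1}(b),\phi^{-2}(b),\dots$ of any $b\in B$ lies in $A$ and returns arbitrarily close to $b$. The only difference is where compactness enters---you pigeonhole the powers of $\phi$ in the compact group $O(n)$ to make $\phi^{-m}$ uniformly close to the identity, whereas the paper pigeonholes the single orbit $\{\phi^{-i}(b)\}$ in the compact sphere, a pointwise version of the same recurrence argument.
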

\begin{proof}
 Otherwise, let some neighborhood $U$ of a point $b\in B$ contain no points of $A$.  Then $\phi^{-1}(U)$ contains no points of $B$, and so again no points of $A$.  But then $\phi^{-2}(U)$ contains no points of $B$ and so no points of $A$, \emph{etc.}: we have that $\phi^{-k}(U)$ contains no points of $B$ (or $A$) for any $k\in \Z^+$.  

On the other hand, for the contradiction, we claim that for any $\ep$, we can always find some $k\in \Z^+$ so that $\phi^{-k}(b)$ is at distance $<\ep$ from $b$.  Certainly, among all of the images $\phi^{-1}(b),\phi^{-2}(b),\phi^{-3}(b)\dots$, there must be a pair $\phi^{-i}(n),\phi^{-j}(n)$ ($i<j$) at distance $<\ep$ (possibly, the distance is 0).  Applying $\phi^i$, we have that $b$ is at distance $<\ep$ from $\phi^{-j+i}(b)$.
\end{proof}

We are now ready to prove Theorem \ref{coll}.  In fact, we are ready to prove a more general statement, which includes Theorem \ref{coll} as a special case, but also makes up an important part of the characterization of (possibly unbounded) convex resilient sets:

\begin{theorem}
\label{t.dcoll}
  Let $X$ be a (possibly unbounded) subset of $\R^n$.  The we have $\sigma(X)=e_r(X)$ for some $r$ and some distance-decreasing similarity transformation $\sigma$ if and only if $X$ is a convex body with an inscribed ball of radius $>r$.
\end{theorem}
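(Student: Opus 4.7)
The ``if'' direction follows unchanged from the opening argument of Section \ref{s.bounded}: if $X$ is a convex body with inscribed ball of radius $R > r$ centered at $p$, then $e_r(X)$ is the image of $X$ under the homothety about $p$ with contraction factor $(R-r)/R \in (0,1)$, which is a distance-decreasing similarity. For the converse, write $\alpha \in (0,1)$ for the scaling factor of $\sigma$, $p$ for its unique fixed point, $\rho$ for its rotational part, and set $R := r/(1-\alpha)$, so $R > r$. Lemma \ref{l.convex} yields that $X$ is convex (the degenerate case of $X$ lying in a lower-dimensional subspace forces $e_r(X)=\emptyset$). The plan is to show that the closed ball of radius $R$ about $p$ is inscribed in $X$.

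First, $p \in X$: for any $x \in X$, Observation \ref{sup} gives $\sigma^i(x) \in e_{r_i}(X) \subseteq X$, while $\sigma^i(x) \to p$ by contractivity, so $p \in X$ since $X$ is closed. Next, the ball of radius $R$ around $p$ lies in $X$: since $\sigma^i(p) = p$ belongs to $e_{r_i}(X) = \sigma^i(X)$, we have $d(p, X^C) \geq r_i$ for every $i$, and $r_i \to R$ by (\ref{e.ps}).

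The core step is showing every regular supporting hyperplane $H$ of $X$ lies at distance exactly $R$ from $p$, so that the ball meets it tangentially. Given such $H$ with outward normal $\nu$, the image $\sigma(H)$ is a regular supporting hyperplane of $e_r(X)=\sigma(X)$, so by Observation \ref{regtreg} it is parallel to and at distance $r$ on the inward side from a unique regular supporting hyperplane $\Phi(H)$ of $X$; this $\Phi(H)$ has outward normal $\rho\nu$ and distance $\alpha\, d(p,H) + r$ from $p$. Iterating, $d(p,\Phi^i(H)) = R + \alpha^i\bigl(d(p,H) - R\bigr)$, while its outward normal is $\rho^i\nu$. When $\rho$ has finite order $k$, $\Phi^k(H)$ shares outward normal $\nu$ with $H$; since a convex body admits at most one supporting hyperplane with a given outward normal, $\Phi^k(H) = H$, forcing $d(p,H)=R$.

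The main obstacle is the case where $\rho$ has infinite order. My plan here is: let $\mathcal{N} \subset S^{n-1}$ denote the outward normals of regular supporting hyperplanes of $X$; since these normals for $e_r(X) = \sigma(X)$ are simultaneously $\rho(\mathcal{N})$ (via $\sigma$) and a subset of $\mathcal{N}$ (via Observation \ref{regtreg}), we obtain $\rho(\mathcal{N}) \subseteq \mathcal{N}$. Observation \ref{o.dense} then yields that $\rho^i(\mathcal{N})$ is dense in $\mathcal{N}$ for each $i$. Using compactness of $O(n)$ to find powers $m_k \to \infty$ with $\rho^{m_k} \to \mathrm{id}$, and invoking continuity of the distance-to-supporting-hyperplane function (a consequence of continuity of the support function of $X$ on the interior of its domain in $S^{n-1}$), I would pass to the limit in $d(p, \Phi^{m_k}(H)) \to R$ and in $\rho^{m_k}\nu \to \nu$ to conclude $d(p,H) = R$. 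The delicate part is the unbounded case, where the support function can be infinite outside a proper cone and one must verify that $\mathcal{N}$ lies in the interior of the domain where continuity applies.
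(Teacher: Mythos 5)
Your route to the converse is genuinely different from the paper's: where the paper combines Observation \ref{o.dense} with a geometric separation argument (a regular supporting hyperplane lying too far from $B(R,p)$ would be cut off from $X$ by a nearly parallel regular supporting hyperplane already known to lie close to the ball), you iterate the map $\Phi$ and exploit recurrence of $\rho^{m}$ to the identity in the compact group $O(n)$, reducing everything to the behaviour of the support function $h_X(\nu)=\sup_{x\in X}\langle x-p,\nu\rangle$. The `if' direction, the homothety case, and the finite-order case are complete and correct as you present them. But the step you yourself flag is a genuine hole, and the repair you propose cannot succeed: it is \emph{false} in general that the normals $\mathcal{N}$ of the regular supporting hyperplanes lie in the interior of the domain of $h_X$. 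Whenever a regular supporting hyperplane meets $X$ in an unbounded face (as for a quadrant $\{x\geq 0\}\cap\{y\geq 0\}$ in $\R^2$, or the tent $X_1$ of Section \ref{R3counter}), its outward normal $\nu$ is orthogonal to a recession direction $u$ of $X$, so every neighborhood of $\nu$ in $S^{n-1}$ contains directions $\nu'$ with $\langle \nu',u\rangle>0$ and hence $h_X(\nu')=\infty$; continuity of $h_X$ at $\nu$ is simply unavailable there.

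The good news is that your argument needs much less than continuity. You only need the inequality $d(p,H)\leq R$ from the limit, since $d(p,H)\geq R$ already follows from $B(R,p)\sbs X$. The support function is a pointwise supremum of continuous functions and is therefore lower semicontinuous on all of $S^{n-1}$; since $B(R,p)\sbs X$ gives $h_X\geq R>0$, we have $h_X(\rho^{m_k}\nu)=d\bigl(p,\Phi^{m_k}(H)\bigr)=R+\alpha^{m_k}\bigl(d(p,H)-R\bigr)\to R$, and lower semicontinuity yields $d(p,H)=h_X(\nu)\leq\liminf_k h_X(\rho^{m_k}\nu)=R$. With that substitution your proof closes, and in fact Observation \ref{o.dense} becomes unnecessary in your route: compactness of $O(n)$ plus uniqueness of the supporting hyperplane with a given outward normal do all the work. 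As written, however, the infinite-order unbounded case is not proved, and the verification you propose for it is doomed rather than merely delicate.
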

\begin{proof}
 Given that $e_r(X)=\sigma(X)$, Observation \ref{sup} gives us that $X$ is resilient to erosion by the radii $r_1,r_2,r_3,\dots$ defined by the partial sums in line (\ref{e.ps}), with corresponding similarity transformations $\sigma,\sigma^2,\sigma^3,\dots,$ respectively.  Let $R=\sup (r_i)=\sum_{k=0}^\infty r\alpha^k$, and let $p$ be the fixed point of the similarity transformation $\sigma$.  $X$ is convex by Lemma \ref{l.convex}.  We will prove Theorem \ref{coll} by demonstrating that the boundaries of all regular supporting half-spaces $H$ of $X$ lie at distance $R$ from the point $p$.  Note that any supporting hyperplane of $X$ must lie at distance $\geq R$ from $p$, since $e_{r_i}(X)$ must contain the point $p$ for every $r_{i}$ (since $p$ is fixed by $\sigma$); we need to show that all of the regular supporting hyperplanes of $X$ lie at distance $\leq R$ from $p$.

\begin{figure}[t]
    \psset{unit=.65cm}
  \begin{center}
  \subfigure[\label{p.easy} The situation in the case where $\sigma$ is just a homothety (\textit{i.e.,} it does not include a rotational component).  Here $H$ is parallel to $H'$, so $H=\bar H$.]{
    \includegraphics{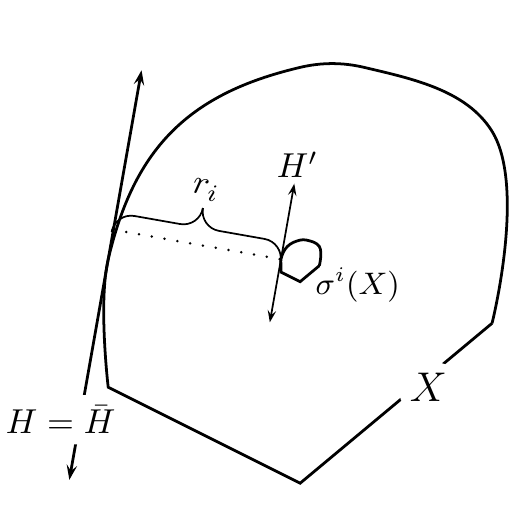}
}
\hspace{.3cm}
  \subfigure[\label{p.hard} The situation where $\sigma$ may include a rotation.  Notice that, as drawn, $H$ is \emph{not} parallel to any regular supporting hyperplane of $e_{r_i}(X)$.]{
    \includegraphics{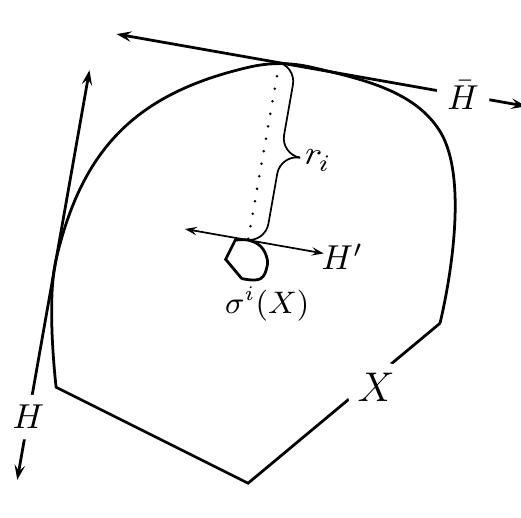}
}
  \end{center}
  \caption{Proving Theorem \ref{t.dcoll}. In both cases, $H'$ is the image of $H$ under $\sigma^i$.  
}
\end{figure}

Since we are not assuming that $X$ is bounded, we do not assume even that the regular supporting hyperplanes of $X$ are at bounded distance from $p$.  To keep this from causing problems, we fix some number $D>R+1$, and focus on the set $\hhh_D$ of regular supporting hyperplanes of $X$ lying within distance $D$ of $X$.  We will show that for any $H$ in $\hhh_D$, we actually have that $H$ lies at distance $\leq R$ from $p$.  Since $D>R+1$ can be chosen arbitrarily, this implies the Theorem.

Because hyperplanes in $\hhh_D$ lie at bounded distance from $p$, we can choose $i$ sufficiently large so that all the images $\sigma^i(H)$, ($H\in \hhh_D$) lie within any positive distance $\ep$ from the point $p$.  Note that each such hyperplane $H'=\sigma^i(H)$ ($H\in \hhh_D$) is a regular supporting hyperplane of $e_{r_i}(X)$.  Thus, by Observation \ref{regtreg}, there is a regular supporting hyperplane $\bar H$ of $X$ lying parallel to and at distance $r_i$ from $H'$.  Since $H'$ is at distance $<\ep$ from $p$, $\bar H$ is at distance $<r_i+\ep$ from $p$, and thus at distance $<R+\ep$.  Let $\hhh_D^i=\{\bar H\st H\in \hhh_D\}$.  Note that by choosing $\ep<1$, we have $R+\ep<D$, thus, in particular, $\hhh_D^i\sbs \hhh_D$.


%
Since $\hhh_D^i$ consists of hyperplanes within distance $R+\ep$ of $p$, we would be done (by letting $\ep$ go to 0) if we could assert that $\hhh_D=\hhh_D^i$ for all $i$.  This is certainly the case if $\sigma$ doesn't include a rotation and consists just of the homothety $x\mapsto \alpha(x-p)+p$.  In this case, given any regular supporting hyperplane $H$ of $X$, its image $H'=\sigma(H)$ is a regular supporting hyperplane of $e_{r_i}(X)$ \emph{which is parallel} to $H$, and therefore we have that $H=\bar H$ (this situation is shown in Figure \ref{p.easy}. 

Since we do not assume that $\sigma$ is homothetic, we allow that $\hhh_D^i\subsetneq \hhh_D$; \emph{i.e.}, that there are regular supporting hyperplanes of $X$ not parallel to any regular supporting hyperplane of $e_{r_i}(X)$.  Figure \ref{p.hard} shows this hypothetical situation.  Let $\nu(H)$ denote the unit normal vector of any supporting hyperplane $H$ of $X$, (oriented towards $X$, say).  Since $\sigma$ is distance-decreasing, it has a fixed point $p$, and can be written as the product of a homothety and an isometry, which both fix $p$.   The isometry induces an isometry $\phi$ of the unit sphere $S^{n-1}$, and we have that $\nu(\bar H)=\phi(\nu(H))$.  Thus, the set of the orientations of regular supporting hyperplanes of $X$ which are parallel to regular supporting hyperplanes of $e_{r_i}(X)$ is the image under the rotation $\phi$ of the set of the orientations of \emph{all} regular supporting hyperplanes of $X$.  Thus by Observation \ref{o.dense}, the set of orientations of regular supporting hyperplanes $\bar H$ which are parallel to regular supporting hyperplanes of $e_{r_i}(X)$ is dense in the set of orientations of all regular supporting hyperplanes of $X$.  In other words, for any regular supporting hyperplane $H$ of $X$ and for any $\ep'>0$, we can find another regular supporting hyperplane $\bar H_0$ of $X$ at dihedral angle $\theta<\ep'$ to $H$, which is parallel to some regular supporting hyperplane $H_0$ of $e_{r_i}(X)$, and thus lies at distance $<R+\ep$ from $p$.

\begin{figure}
  \begin{center}
    \includegraphics{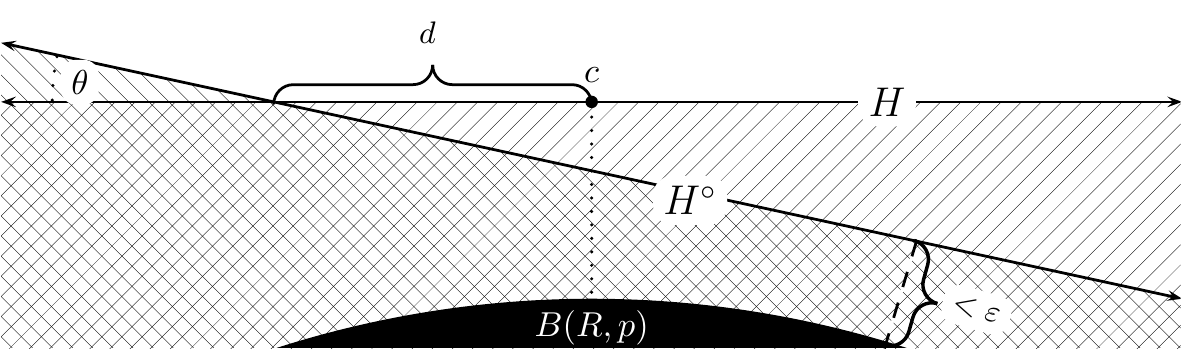}
  \end{center}
  \caption{Finishing the proof of Theorem \ref{t.dcoll}\label{f.proofsep}.  The shaded regions are the half-spaces corresponding to $H$ and $\bar H_0$.  $X$ lies somewhere in their intersection.}
  
\end{figure}

To finish the proof, assume that some hyperplane $H\in \hhh_D$ of $X$ is at distance $>\ep$ from the ball $B(R,p)$ of radius $R$ centered at $p$.  We have that there is a hyperplane $H^\circ\in \hhh_D^i$ of $X$ at an arbitrarily small dihedral angle $\theta$ to $H$.  Since $H^\circ\in\hhh_D^i$, we have that $H^\circ$ is within distance $R+\ep$ of $p$.  Referring to Figure \ref{f.proofsep}, we let $c$ be the point on the boundary of $H$ which is closest to the point $p$, and let $d$ be the distance between $c$ and the intersection $H\cap H^\circ$.  Any points on $H$ at distance $<D$ from $c$ lie outside of supporting half-space of $X$ whose boundary is $H^\circ$, and so outside of $X$ as well.  By choosing $H^\circ$ so that the angle $\theta$ is arbitrarily large, we can make $d$ arbitrarily large; thus no points on the boundary of $H$ lie on $X$, and $H$ cannot be a supporting half-space to $X$ after all.  We conclude that all regular supporting half-spaces of $X$ lie at distance $R$ from $p$.
\end{proof}

\section{The general convex case}
\label{s.unbounded}
In this section we consider the case of general (\emph{i.e.}, possibly unbounded) convex resilient sets; our aim is to characterize these sets in simple geometric terms.  Theorem \ref{t.dcoll} already provides a characterization for the case where $e_r(X)=\sigma(X)$ and $\sigma$ is distance-decreasing.  Two cases remain for our geometric characterization of convex resilient sets: the case where $\sigma$ is an isometry, and the case where $\sigma$ is distance-increasing.  These cases are covered in Section \ref{s.cacc}.  First, we consider some examples from $\R^3$ which demonstrate that Theorem \ref{t.dcoll} isn't the complete story, and that all of these possibilities really can occur.  (It is not too hard to check that at least 3 dimensions are required to have examples of convex resilient sets not covered by Theorem \ref{t.dcoll}.)

\subsection{Unbounded convex examples in $\R^3$}
\label{R3counter}
In this section we give an example of a convex set $X_1$ such that $X_1$ is resilient under a distance-decreasing similarity transformation and so has an inscribed ball, $e_r(X_1)$ is not resilient, and such that $e_{2r}(X_1)$ is resilient under a distance-increasing similarity transformation (and has no inscribed ball).

Let $X_1$ be the `bottomless tent' depicted in Figure \ref{tent}.  $X_1$ is the unbounded intersection of four half-spaces $H_i$, chosen so that $\bound H_1\cap \bound H_3\cap X_1$ is a line segment (here $\bound H_i$ denotes the boundary), while $H_i\cap H_j\cap X_1$ is a ray whenever $j\equiv i+1\pmod 4$.  The half-spaces are chosen so that the dihedral angle between $H_2$ and $H_4$ is greater than that between $H_1$ and $H_3$, and such that any horizontal cross section is rectangular.

\begin{figure}[t]
  \begin{center}
   \subfigure[
$X_1$\label{tent}
]{
     \includegraphics{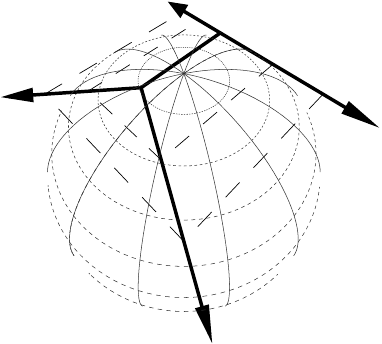}
   }
\hspace{-.5cm}
    \subfigure[
$X_2=e_r(X)$\label{ibltetra}]{
      \includegraphics{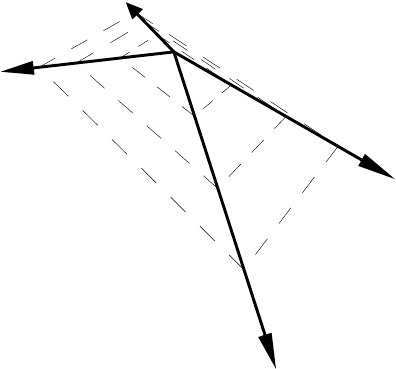}
    }
\hspace{-.5cm}
    \subfigure[\label{badtent}
$X_3=e_r(X_2)$]{
      \includegraphics{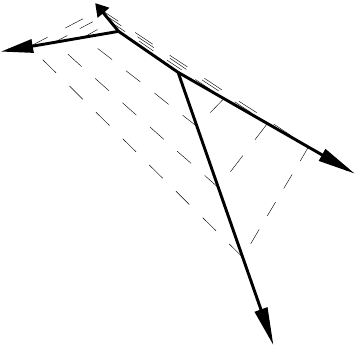}
      }
\caption{$X_1$ is resilient under distance-decreasing transformations.  $X_2$ is not resilient to erosion.  $X_3$ is resilient under distance-increasing transformations.}
 \end{center}
\end{figure}

$X_1$ has an inscribed ball, as depicted in Figure \ref{tent}.  Thus $X_1$ is resilient to erosion by radii less than the radius $r$ of the ball; $X_2=e_r(X_1)$ is depicted in Figure \ref{ibltetra}.  It is not resilient and so has no inscribed ball.

If we erode $X_2$ by the radius $r$, we get the convex body $X_3$ depicted in Figure \ref{badtent}.  It has no inscribed ball, but is nevertheless resilient: the erosion of $X_3$ by any positive radius $r$ satisfies $e_r(X_3)=\sigma(X_3)$ for some similarity transformation $\sigma$ which increases distances.  

Finally: what if in Figures \ref{tent} or \ref{badtent} the dihedral angles between the two pairs of opposite half-spaces had been \emph{equal}?  Like $X_3$, the result $X_4$ would not have an inscribed ball, but would still be resilient.  In fact, for all $r>0$, we would have $\sigma(X_4)=e_r(X_4)$ where $\sigma$ is now a \emph{translation}. 

\subsection{Characterizing \emph{all} resilient convex bodies in $\R^n$}
\label{s.cacc}




We say that a convex body $X$ has an \emph{exscribed ball} of radius $r$ centered at a point $p$ if all of its regular supporting half-spaces are at distance $r$ from $p$ (in particular, $p$ lies outside of all of them). Note that, in general, the distance between $X$ and $p$ may be greater than $r$. 

This concept allows us to extend the characterization in Theorem \ref{t.dcoll} to convex sets for which $e_r(X)=\sigma(X)$, where $r>0$ and $\sigma$ is a distance-\emph{increasing} similarity transformation.

\begin{theorem}
  An (unbounded) convex body $X\sbs \R^n$ satisfies $e_r(X)=\sigma (X)$ for some $r>0$ and a distance-increasing similarity transformation $\sigma$ if and only if $X$ has an exscribed ball.
\label{t.inccoll}
\end{theorem}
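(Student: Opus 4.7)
The plan for the \emph{if} direction is to realize $\sigma$ as the pure homothety centered at the center $p$ of the exscribed ball. If $X$ has an exscribed ball of radius $R$ at $p$, set $\alpha=(R+r)/R>1$ and let $\sigma$ be the homothety fixing $p$ with scaling factor $\alpha$. Each regular supporting half-space $H$ of $X$ has boundary at distance $R$ from $p$ with $p$ on the outward (non-$X$) side; its image $\sigma(H)$ is the parallel half-space whose boundary lies at distance $\alpha R=R+r$ from $p$, still on the outward side---equivalently, shifted inward from $\partial H$ by $r$ toward $X$, which is exactly $e_r(H)$. Since erosion commutes with intersection and $X$ is the intersection of its regular supporting half-spaces, $\sigma(X)=\bigcap_H\sigma(H)=\bigcap_H e_r(H)=e_r(X)$, with $\sigma$ distance-increasing as required.

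For the converse, assume $e_r(X)=\sigma(X)$ with $\sigma$ distance-increasing of factor $\alpha>1$ and fixed point $p$. Paralleling the setup in the proof of Theorem \ref{t.dcoll}, for each regular supporting hyperplane $H$ of $X$ the image $\sigma(H)$ is a regular supporting hyperplane of $\sigma(X)=e_r(X)$, and by Observation \ref{regtreg} it is parallel to---and at distance $r$ on the outward side of---some regular supporting hyperplane $\bar H=f(H)$ of $X$. Writing $s(H,p)$ for the signed distance from $p$ to $H$ (positive when $p$ lies on the $X$-side of $H$), a short computation (checking both sign cases for $p$) yields the affine recursion $s(f(H),p)=\alpha s(H,p)+r$, so $s(f^i(H),p)=s^*+\alpha^i(s(H,p)-s^*)$, where $s^*:=-r/(\alpha-1)<0$ is the unique fixed point. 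The aim is to show $s(H,p)=s^*$ for every regular supporting hyperplane $H$, which is precisely the condition that $X$ has an exscribed ball at $p$ of radius $|s^*|$. The lower bound $s(H,p)\geq s^*$ is immediate: since $s(H,p)=\sup_{x\in X}\nu(H)\cdot (p-x)\geq-|p-x_0|$ for any fixed $x_0\in X$, the values $s(H,p)$ cannot tend to $-\infty$, so any $H$ with $s(H,p)<s^*$ would force $s(f^i(H),p)\to-\infty$, a contradiction.

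The upper bound $s(H,p)\leq s^*$ is the crux and the main obstacle; the plan is to transplant the endgame of Theorem \ref{t.dcoll}. Assuming for contradiction that $s(H_0,p)>s^*$ for some $H_0$, the iterates $H_i=f^i(H_0)$ are regular supporting hyperplanes of $X$ with orientations $\phi^i(\nu(H_0))$ (where $\phi$ is the rotational part of $\sigma$) and signed distances $s(H_i,p)\to+\infty$. By the same compactness argument as in the proof of Observation \ref{o.dense}, one can extract a subsequence $i_k\to\infty$ with $\phi^{i_k}\to\mathrm{id}$, so that $\nu(H_{i_k})\to\nu(H_0)$ while $s(H_{i_k},p)\to+\infty$. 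This pair of regular supporting hyperplanes of $X$ with nearly identical orientations but with $s$-values differing by arbitrarily much is then ruled out by the same kind of geometric reasoning illustrated in Figure \ref{f.proofsep}: the supporting half-space bounded by $H_{i_k}$ cuts off an arbitrarily large neighborhood, within $H_0$, of the foot of the perpendicular from $p$ to $H_0$, forcing the contact points of $H_0$ with $\partial X$ to recede to infinity as $k\to\infty$, so $H_0$ in fact has no support points on $X$ at all---contradicting its being a regular supporting hyperplane. The main wrinkle relative to the inscribed case is the sign of $s^*$: the point $p$ may now lie on the non-$X$ side of $H_0$, so the roles of the two hyperplanes in the cutting argument are interchanged relative to Theorem \ref{t.dcoll}, with $H_{i_k}$ (not $H_0$) playing the restrictive role. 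Carrying out this sign-sensitive transplantation carefully is the main work.
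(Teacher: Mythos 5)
Your \emph{if} direction and your lower bound $s(H,p)\geq s^*$ are both correct (the latter is a nice, clean argument), but there is a genuine gap exactly where you locate the ``main work'': the upper bound $s(H,p)\leq s^*$. The transplanted cutting argument does not survive the sign change. Under your contradiction hypothesis $s(H_0,p)>s^*$, the forward iterates satisfy $s(H_{i_k},p)=s_k\to+\infty$, which means the half-space $H_{i_k}=\{x:\nu(H_{i_k})\cdot(x-p)\geq -s_k\}$ contains the entire ball $B(s_k,p)$. So far from ``playing the restrictive role,'' $H_{i_k}$ imposes \emph{no} constraint anywhere near $p$ or near the foot $c_0$ of the perpendicular from $p$ to $\partial H_0$: a short computation shows the points of $\partial H_0$ excluded by $H_{i_k}$ all lie at distance at least $(s_k-s_0)/\sin\theta_k$ from $c_0$, i.e.\ the excluded region recedes to infinity rather than engulfing the contact points of $H_0$. (Reversing the roles does not help either: $H_0$ does cut off a large neighborhood of the foot of the perpendicular on $\partial H_{i_k}$, but since $H_{i_k}$ varies with $k$ its contact point may simply sit far from that foot, and no contradiction results. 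Indeed, a convex body can perfectly well have supporting hyperplanes with $s\to+\infty$ and convergent normals --- consider $\{y\geq x^2\}$ --- so the configuration you reach is not intrinsically absurd; more structure must be used.)

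The paper closes this gap by running the iteration \emph{backwards} rather than forwards, which is where Observation \ref{regtreg} is surjective: from $X=e_{r_i/\alpha^i}(\sigma^{-i}(X))$ (Observations \ref{o.commutes} and \ref{supfrom}-style), \emph{every} regular supporting hyperplane of $X$ is parallel to and at distance $r_i/\alpha^i$ from one of $\sigma^{-i}(X)$, and the latter all collapse onto $p$ as $i\to\infty$; this yields distance exactly $r/(\alpha-1)$ with no recurrence argument needed. The one place a recurrence-plus-cutting argument \emph{is} needed is the preliminary Lemma \ref{l.incout} ($p$ lies outside every supporting half-space), and there the cutting works precisely because the auxiliary half-spaces $\sigma^{-i}(H_i^*)$ \emph{exclude} $p$ by an amount bounded below --- the opposite sign situation from your $H_{i_k}$. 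To repair your proof you would need either to prove something like Lemma \ref{l.incout} first and then switch to the backward iteration, or to find a genuinely different argument for the upper bound; the forward iteration alone does not produce half-spaces that constrain $H_0$.
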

\noindent Thus, in particular, the example $X_3$ from Section \ref{R3counter} has an exscribed ball.

The final piece of our characterization of convex resilient sets (covering for example the set $X_4$ from Section \ref{R3counter}) is the following theorem.

\begin{theorem}
    An (unbounded) convex set $X\sbs \R^n$ satisfies $e_r(X)=\sigma (X)$ for some $r>0$ and an isometry $\sigma$ if and only if all regular supporting hyperplanes of $X$ lie at some common dihedral angle $\neq \frac \pi 2$ to some fixed hyperplane.\label{t.isomcoll}
\end{theorem}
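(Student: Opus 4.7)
For the ``if'' direction I construct $\sigma$ explicitly. Suppose every unit outward normal $\nu$ of a regular supporting hyperplane of $X$ satisfies $\langle \nu,\hat v\rangle=c$ for some fixed unit vector $\hat v$ and constant $c\neq 0$. Take $\sigma$ to be the translation $\sigma(x)=x-(r/c)\hat v$. For each regular supporting half-space $H=\{y:\langle y,\nu\rangle\leq h_X(\nu)\}$ of $X$, one computes $\sigma(H)=\{y:\langle y,\nu\rangle\leq h_X(\nu)-r\}$ (since $\langle -(r/c)\hat v,\nu\rangle=-r$), which coincides with the erosion $e_r(H)$. Intersecting over all regular supporting half-spaces, and using the facts noted in the excerpt that $X$ equals this intersection and that erosion commutes with intersection, we get $\sigma(X)=\bigcap_H\sigma(H)=\bigcap_He_r(H)=e_r(X)$.

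For the ``only if'' direction, suppose $\sigma(X)=e_r(X)$ with $\sigma(x)=Rx+t$ an isometry. Let $N$ be the set of unit outward normals of regular supporting hyperplanes of $X$ and let $h_X$ denote the support function. Applying Observation \ref{regtreg} to the identities $\sigma^i(X)=e_{ir}(X)$ given by Observation \ref{sup} yields $R(N)\subseteq N$ together with the cocycle equation
\[
h_X(R\nu)-h_X(\nu)=\langle t,R\nu\rangle+r\qquad(\nu\in N).
\]
Iterating and applying the mean ergodic theorem (the Cesaro averages of $R^i$ converge to the orthogonal projection $P_+$ onto the fixed subspace $V_+^R$ of $R$) gives
\[
h_X(R^M\nu)=h_X(\nu)+M\bigl(\langle t,P_+\nu\rangle+r\bigr)+O(1).
\]
My aim is to show the rate $c(\nu):=\langle t,P_+\nu\rangle+r$ equals $0$ for every $\nu\in N$.

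I sandwich $c(\nu)$ between two bounds. For $c(\nu)\geq 0$: pick any $x_0\in X$, so $h_X(\mu)\geq\langle x_0,\mu\rangle\geq -\|x_0\|$ for every unit $\mu$; this uniform lower bound on $h_X(R^M\nu)$ forces the linear coefficient in the expansion above to be non-negative. For $c(\nu)\leq 0$: iterating the ``bar'' construction of Observation \ref{regtreg} through $\sigma$ produces, from any regular boundary point $y$ of $X$ with normal $\nu$, the sequence of regular boundary points $\bar y_i=\sigma^i(y)+irR^i\nu=i(P_+t+rR^i\nu)+O(1)\in X$. Since orbits of isometries on the compact sphere are recurrent, $\nu$ lies in its own $R$-orbit closure, so along a subsequence $i_k$ with $R^{i_k}\nu\to\nu$ (and assuming $P_+t+r\nu\neq 0$) the asymptotic direction $\bar y_{i_k}/\|\bar y_{i_k}\|\to(P_+t+r\nu)/\|P_+t+r\nu\|$ lies in the recession cone $\mathrm{rec}(X)=\mathrm{dom}(h_X)^\circ$. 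Pairing with $\nu\in\mathrm{dom}(h_X)$ then gives $\langle P_+t+r\nu,\nu\rangle\leq 0$, i.e., $c(\nu)\leq 0$. Combining, $c(\nu)=0$; equivalently, with $t_\parallel:=P_+t$, we have $\langle t_\parallel,\nu\rangle=-r$ for every $\nu\in N$.

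To conclude, $r>0$ forces $t_\parallel\neq 0$, so setting $\hat v=t_\parallel/\|t_\parallel\|$ yields the constant value $\langle\nu,\hat v\rangle=-r/\|t_\parallel\|\neq 0$ for every $\nu\in N$, giving the common dihedral angle $\arccos(r/\|t_\parallel\|)\neq\pi/2$ with the hyperplane $\{x:\langle x,\hat v\rangle=0\}$. The main technical obstacle is the degenerate case $P_+t+r\nu=0$, which forces $\|t_\parallel\|=r$ and $\nu=-t_\parallel/r\in V_+^R$ (so $X$ is essentially a half-space); I handle it directly since then $R\nu=\nu$ and the cocycle degenerates to $0=\langle t,\nu\rangle+r$, while $t-t_\parallel\in(V_+^R)^\perp$ is orthogonal to $\nu$, yielding $\langle t_\parallel,\nu\rangle=-r$ at once.
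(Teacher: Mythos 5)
Your proof is correct, but it takes a genuinely different route from the paper's. The paper argues synthetically: it writes $\sigma$ as a commuting product $\tau\cdot\phi$ of a translation and a rotation, shows $\sigma$ can have no fixed point (so $\tau$ is nontrivial and there is an invariant line $L$ in the direction $\nu_\tau$ meeting $X$), defines the critical angle $\theta_{\tau,r}$ at which a half-space is translated by exactly $\tau$ under erosion by $r$, and then pins every regular supporting half-space to that angle by two separate geometric arguments --- one comparing the speed at which erosion pushes $X\cap L$ along $L$ against the speed at which it pushes a too-steep half-space, and one ``separating'' a too-shallow half-space from $X$ by nearly parallel regular supporting half-spaces (the same device as in the proof of Theorem \ref{t.dcoll}). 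You instead work analytically with the support function: the cocycle identity $h_X(R\nu)-h_X(\nu)=\langle t,R\nu\rangle+r$ on the set $N$ of regular normals, the mean ergodic theorem to isolate the drift $c(\nu)=\langle t,P_+\nu\rangle+r$ (your $P_+t$ is exactly the paper's $\tau(0)$), and a two-sided sandwich: $c(\nu)\ge 0$ from the trivial lower bound on $h_X$, and $c(\nu)\le 0$ from the recession-cone pairing with the recurrent directions $R^{i_k}\nu\to\nu$. Both proofs ultimately lean on the same recurrence of the induced rotation on $S^{n-1}$ (the paper via Observation \ref{o.dense}, you via recurrence of isometry orbits), but your mechanism replaces the paper's two ad hoc separation pictures with one clean quantitative identity, and it also supplies the ``if'' direction explicitly, which the paper leaves essentially implicit. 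One small point worth flagging in writing it up: the theorem's ``common dihedral angle'' must be read with oriented normals (a common value of $\langle\nu,\hat v\rangle$, not of $|\langle\nu,\hat v\rangle|$), exactly as the paper's convention for the angle between a half-space and a vector dictates and as your ``if'' direction assumes --- otherwise a slab would be a counterexample. Also, when invoking recurrence you should note that the return times $i_k$ can be taken tending to infinity (a one-line pigeonhole on powers of $R^{M+1}$), since you need $\lVert\bar y_{i_k}\rVert\to\infty$ for the recession-cone step.
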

\noindent By Lemma \ref{l.convex}, the convexity assumption is superfluous in Theorem \ref{t.isomcoll}.

\subsubsection*{Proof of Theorem \ref{t.inccoll}}

We are striving to show that all of the regular supporting half-spaces of $X$ are at a common distance from some point external to all of them.  We first have to show that there is such an external point.  The following lemma shows that the fixed point of $\sigma$ must be external to every half-space.

\begin{lemma}
  If a convex body $X\sbs \R^n$ satisfies $e_r(X)=\sigma (X)$ for some $r>0$ and a distance-increasing similarity transformation $\sigma$, then the fixed point $p$ of $\sigma$ lies outside of every supporting half-space of $X$. 
\label{l.incout}
\end{lemma}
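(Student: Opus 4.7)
The plan is to argue by contradiction. Suppose the fixed point $p$ lies in some closed supporting half-space $H$ of $X$, and pick a contact point $q\in X$ on the supporting hyperplane $L=\partial H$. Choose coordinates so that $H=\{x:\langle x,n\rangle\geq 0\}$ with $n$ the inward unit normal; then $\langle q,n\rangle=0$, and set $c:=\langle p,n\rangle\geq 0$. The goal is to derive $c<0$, a contradiction.

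Write $\sigma(x)=p+\alpha R(x-p)$ with scaling factor $\alpha>1$ and $R$ an orthogonal transformation. By Observation~\ref{sup}, $\sigma^i(X)=e_{r_i}(X)$ for $r_i=r(\alpha^i-1)/(\alpha-1)$. Since erosion is monotone and $X\subseteq H$, we get $\sigma^i(X)\subseteq e_{r_i}(H)=\{x:\langle x,n\rangle\geq r_i\}$; evaluating at $q$ and unpacking $\sigma^i(q)=p+\alpha^i R^i(q-p)$ yields
\[
\langle R^i(q-p),n\rangle \;\geq\; \frac{r_i-c}{\alpha^i},
\]
and the right-hand side tends to $r/(\alpha-1)>0$ as $i\to\infty$. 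If $q=p$ the left-hand side is identically $0$ and this is already a contradiction for large $i$, so from now on I may assume $q\neq p$.

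The crux is to convert this bound — which a priori only says the spherical orbit $\{R^i(q-p)\}$ eventually has positive $n$-component — into information about the initial value $\langle q-p,n\rangle=-c$. For this I would invoke almost-periodicity of the orbit: it lies on the compact sphere of radius $|q-p|$ about the origin, so some subsequence $R^{i_j}(q-p)$ converges, and then the differences $m=i_{j+1}-i_j$ satisfy $|R^m(q-p)-(q-p)|<\epsilon$ for any preset $\epsilon>0$, with $m$ arbitrarily large. For such $m$, $\langle R^m(q-p),n\rangle$ lies within $\epsilon$ of $-c$; substituting into the display and letting $m\to\infty$ and then $\epsilon\to 0$ forces $-c\geq r/(\alpha-1)$, so $c<0$, the desired contradiction.

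The main obstacle I anticipate is precisely this almost-periodicity step. If $\sigma$ were a pure homothety ($R=I$), the orbit of $q$ would sit on a ray out of $p$ and the inequality $\langle q-p,n\rangle\geq r/(\alpha-1)$ would drop out of the display directly. A general distance-increasing similarity carries a rotational component, turning the orbit into a spiral and obscuring the link between the eventual positivity of $\langle R^i(q-p),n\rangle$ and the initial quantity $-c$. Recurrence of $R$ on the sphere is what repairs this: infinitely often $\sigma^m$ acts on $q$ essentially as an unrotated dilation about $p$, and at those moments the erosion estimate forces $p$ strictly across $L$ from $X$.
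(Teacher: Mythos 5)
Your argument is correct, and although it shares the paper's two essential ingredients---the iterated identity $\sigma^i(X)=e_{r_i}(X)$ with $r_i/\alpha^i\to r/(\alpha-1)$, and recurrence of the rotational part of $\sigma$ on the sphere---it reaches the contradiction by a different mechanism. The paper argues with half-spaces rather than points: it takes a supporting half-space $H_i^*$ of $e_{r_i}(X)$ parallel to $H$, which excludes $p$ once $r_i$ is large, pulls it back to the supporting half-space $H_i^\circ=\sigma^{-i}(H_i^*)$ of $X$ (still excluding $p$), and then, choosing $i$ so that $H_i^\circ$ is nearly parallel to $H$---this is where recurrence enters, though the paper leaves it implicit---concludes that $H\cap H_i^\circ$ misses every point of $\partial H$ within an arbitrarily large distance of the point of $\partial H$ nearest $p$, so $H$ cannot be supporting after all. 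You instead push a single contact point $q$ forward, read off a scalar inequality for the inward-normal component of $R^i(q-p)$, and use recurrence of $R$ to transport the limiting lower bound $r/(\alpha-1)$ back to exponent $0$. Your route is more elementary, replacing the geometry of nearly-parallel half-space intersections by one inner-product estimate, and, if run without the standing assumption $c\ge 0$, it proves the sharper fact that $d(p,H)\ge r/(\alpha-1)$ for \emph{every} supporting half-space $H$; that is precisely the lower bound complementing the upper estimate the paper derives immediately after this lemma in the proof of Theorem \ref{t.inccoll}. One cosmetic point: the consecutive gaps $i_{j+1}-i_j$ of a convergent subsequence need not be large, but differences $i_{j'}-i_j$ with $j$ fixed and $j'$ large can be made arbitrarily large and satisfy the same $\epsilon$-closeness, which is all your argument actually uses.
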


\begin{figure}
  \begin{center}
    \includegraphics{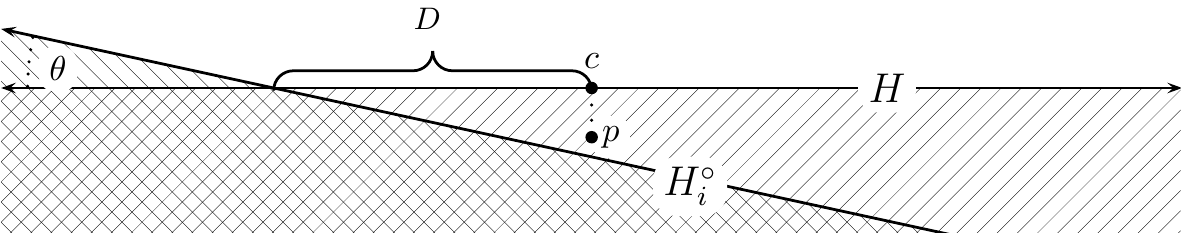}
  \end{center}
  \caption{(Proving Lemma \ref{l.incout}.) \label{f.sepp} }
  
\end{figure}

\begin{proof}
  Write $\sigma=\gamma\circ \phi$, where $\gamma$ is a homothety, $\phi$ is rotation (an isometry of $S^{n-1}$), and $\sigma$, $\gamma$, and $\phi$ all have the same fixed point.  
Let $H$ be a supporting half-space of $X$ which contains the fixed point $p$ of $\sigma$.  Similar to the end of the proof of Theorem \ref{t.dcoll}, our aim is to `separate'  $H$ from $X$ by nearly parallel half-spaces which are also regular supporting half-spaces to $X$.   

As before, let $r_i=\sum_{k=0}^{i-1}\alpha^k r$.  Since $e_{r_i}(X)\sbs H$ for all $i$, $e_{r_i}(X)$ has a (not necessarily regular) supporting half-space $H_i^*$ which is parallel to $H$.  Since $\alpha>1$, the $r_i$'s get arbitrarily large as $i$ increases, so we have for sufficiently large $i$---say, $i>K$ for some $K>0$---that $p$ lies outside of $H_i^*$. Now, since $H_i^*$ is a supporting half-space of $e_{r_i}(X)$, $H_i^\circ=\sigma^{-i}(H_i^*)$ is a supporting half-space to $X$.  The important point is that $p$ must also lie outside of $H_i^\circ$, since it lies outside of $H_i^\star$, and $\sigma$ fixes $p$.

Thus, letting $c$ denote the point on the boundary of $H$ which is closest to $p$, we have that points on the boundary of $H$ within the distance $D$ between $p$ and the intersections of the boundaries of $H$ and $H_i^\circ$ lie outside of $H_i^\circ$, and so outside $X$ (see Figure \ref{f.sepp}).  This distance can be made arbitrarily large by choosing $i$ so that $\theta$ is arbitrarily small, thus we get that there are no points on the boundary of $H$ from $X$, thus $H$ is not supporting to $X$, a contradiction.
%
\end{proof}

We can now prove Theorem \ref{t.inccoll}.  Note that the distances between $p$ and the regular supporting half-spaces of $X$ are bounded by the distance between $p$ and $X$.  Thus, by choosing $i$ sufficiently large, Lemma \ref{t.inccoll} implies that we can ensure that all of the images $\sigma^{-i}(H)$ of regular supporting half-spaces $H$ of $X$ lie within distance $\ep$ of $p$ for any positive $\ep$.  By Observation \ref{o.commutes} we have that $X=e_{r_i/\alpha^i}(\sigma^{-i}(X))$. Thus by Observation \ref{regtreg}, every regular supporting half-space of $X$ is at distance $\frac {r_i}{\alpha^i}$ from a regular supporting half-space of $\sigma^{-i}(X)$; thus, all regular supporting half-spaces of $X$ are at distance $<\frac{r_i}{\alpha_i}+\ep$ from $p$.  Taking the limit as $i$ goes to infinity, we get that any regular supporting half-space of $X$ is at distance $\frac{r}{\alpha-1}$ from the point $p$.\qed



\subsubsection*{Proof of Theorem \ref{t.isomcoll}}

Any isometry can be written as the commutative product $\tau\cdot \phi$ of a translation and rotation (by rotation we mean any isometry with a fixed point).  (A proof of this fact can be found in \cite{regpolys}, p.~217.)  Under the conditions of Theorem \ref{t.isomcoll}, we have that $X$ is collapsible by arbitrarily large radii, so it is easy to see that $\sigma$ can have no fixed point: such a point cannot lie inside $X$, since then sufficiently large radii would erode $X$ to the point where it does not contain it; if it lies outside on the other hand, the distance between it and $X$ increases with erosion.  

Thus $\tau$ is nontrivial.  Since the product $\tau\cdot \phi$ commutes, we have that the rotation $\phi$ must fix the direction $\nu_\tau$ of the translation $\tau$, thus all of the points on a line $L=p+\nu_\tau x,$ $x\in \R$, where $p$ is a fixed point of $\phi$.

Eroding a half-space by any radius gives a half-space similar to the first under a translation.  In fact, similarity can be under a translation in any direction of our choice (except along a vector lying inside any hyperplane parallel to the boundary); the magnitude of the translation then depends just on the radius of collapse and the angle between the direction of the translation and the half-space in question. (We take as the angle between a half-space and a vector the angle between the normal vector of the half-space and the vector).  Thus the pair $r$ and the magnitude $\abs{\tau(0)}$ of the translation $\tau$ together determine a unique angle $\theta_{\tau,r}$, which is the angle a half-space $H$ must lie at relative to the vector $\nu_\tau$ for us to have that $e_r(H)=\tau(H)$.  If we have that $\phi$ is trivial, so that $\sigma=\tau$, then it follows easily (using Observation \ref{regtreg}) that all regular supporting half-spaces of the set $X$ must lie at the angle $\theta_{\tau,r}$ to $\nu_\tau$, as desired: basically, they must all get `pushed' by the erosion in the direction $\nu_\tau$ at the same rate.  We will now show this is true even if $\phi$ is nontrivial.  First we point out that $X$ must intersect $L$: otherwise, the distance between $L$ and $X$ increases with erosion, contradicting that $\sigma$ is an isometry fixing $L$.
%
%
%
This implies that no supporting half-space of $H$ can be at angle $\frac \pi 2$ to $\nu_\tau$, since then, for a sufficiently large $k$, $e_{kr}(H)$ would not intersect the line $L$, and so neither would $e_{kr}(X)$.

If $H$ is a regular supporting half-space to $X$, let $\theta_H$ denote its angle to the vector $\nu_\tau$.  We first claim that we must have that $\theta_H\leq \theta_{\tau,r}$.  Otherwise, if $\theta_H>\theta_{\tau,r}$, we have that $e_r(H)=\tau_H(H)$ where $\tau_H$ is a translation in the direction $\nu_\tau$ but with strictly greater magnitude: $\abs{\tau_H(0)}>\abs{\tau(0)}$. Let $q$ be the point where  $L$ intersects $\delta H$ (we cannot have $L\sbs \delta H$ since, as noted above, their angle cannot be $\frac \pi 2$).   

Observe that the distance between $q$ and $e_{kr}(X)\cap L$ is just the original distance between $q$ and $X\cap L$, plus the magnitude of the translation $\tau^k$:
\begin{equation}
d(q,e_{kr}(X)\cap L)=k\abs{\tau(0)}+d(q,X\cap L).
\label{e.tmoves}
\end{equation}
On the other hand, we have that $e_{kr}(X)\sbs e_{kr}(H)=\tau_H^k(H)$, and for suitably large $k$ we have that $d(q,\tau_H^k(H))>k\abs{\tau(0)}+d(q,X\cap L)$, contradicting line (\ref{e.tmoves}).  (Notice that this part of the argument has not used the regularity of $H$; $X$ cannot have \emph{any} supporting half-spaces at angles $>\theta_{\tau,r}$ from $\nu_\tau$.)

\begin{figure}
  \begin{center}
    \includegraphics{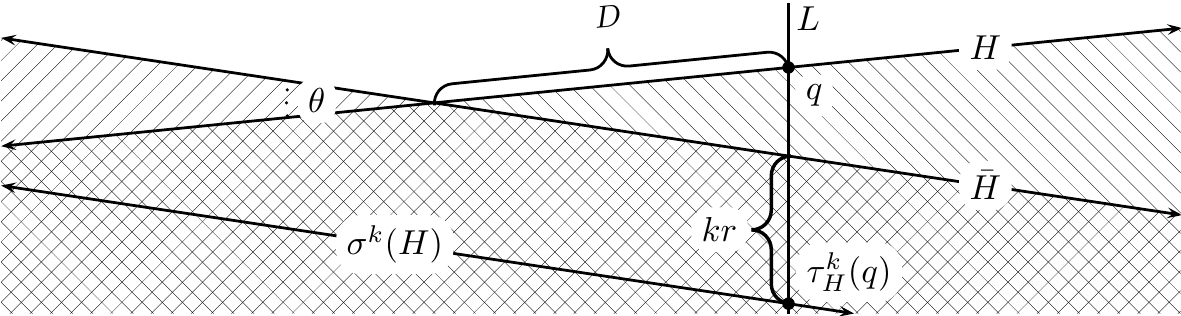}
  \end{center}
  \caption{(Finishing the proof of Theorem \ref{t.isomcoll}.) \label{f.isom} }
  
\end{figure}

Now we wish to show that $\theta_H\geq \theta_{\tau,r}$.  Assume the contrary.  Let again $q$ be the intersection point of $L$ with the boundary of $H$, and again let $\tau_H$ be the translation in the direction $\nu_\tau$ satisfying $e_r(H)=\tau_H(H)$.

Let $H'=\sigma^k(H)$ for some $k$.  We have that $H'$ passes through $\tau_H^k(q)$, which lies at distance $k\abs{\tau_H(0)}$ from $q$.  Observation \ref{regtreg} implies that $X$ has a regular supporting half-space $\bar H$ parallel to $H'$, with $d(\bar H,H')=kr$.  The important point is that the distance between $q$ and $\tau_H^k(q)$ is greater than $kr$, and ever more so as we increase $k$.  Therefore, referring to Figure \ref{f.isom}, we can make the distance $D$ between the boundary of $H$ and the intersection of the boundaries of $H$ and $\bar H$ arbitrarily large by choosing $k$ so that the angle $\theta$ is small.  We conclude that no points on the boundary of $H$ can intersect $X$, a contradiction.\qed

\subsection{Convex sets resilient to expansion}
\label{s.exp}
While in general, asking about erosion and expansion are equivalent by taking complements, it is natural to wonder about the family of \emph{convex} sets resilient to expansion, \emph{i.e.,} convex sets  $X$ for which $E_r(X)=\sigma(X)$ for some $r>0$, and some similarity transformation $\sigma$.  It turns out that this family is not as rich as the corresponding family for erosion.  In particular, any convex expansion-resilient set is also erosion-resilient; moreover, as is not hard to verify, the only bounded sets resilient to expansion are open balls.  The following gives the characterization of all convex sets resilient to expansion:
\begin{theorem}
  A (possibly unbounded) convex open subset of $\R^n$ is resilient to expansion by some radius $r$ if and only if \emph{all} of its supporting half-spaces are at a common distance $R>r$ from some point.
\label{t.expansion}
\end{theorem}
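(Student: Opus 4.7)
The plan is to mirror the proofs of Theorems \ref{t.dcoll} and \ref{t.inccoll}, using that expansion commutes with similarity in the form $\sigma(E_r(X))=E_{\alpha r}(\sigma(X))$ and that for a convex open $X$, each regular supporting half-space $H$ of $X$ gives rise to a regular supporting half-space $E_r(H)$ of $E_r(X)=X+B(r,0)$ by shifting $\partial H$ outward by $r$ along its outward normal, so outward normals are preserved under expansion. For the ``if'' direction, suppose all regular supporting half-spaces of $X$ lie at common distance $R>r$ from a point $p$; then $p\in\mathrm{int}(X)$, and I would take $\sigma$ to be the homothety about $p$ with scaling factor $\alpha=1+r/R$. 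A direct support-function calculation, using $h_{E_r(X)}(\nu)=h_X(\nu)+r$ and $h_{\sigma(X)}(\nu)=\alpha h_X(\nu)-(\alpha-1)\langle p,\nu\rangle$, gives $h_{E_r(X)}=h_{\sigma(X)}$ on all of $S^{n-1}$; since both $E_r(X)$ and $\sigma(X)$ are open convex sets, this yields $E_r(X)=\sigma(X)$.

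For the ``only if'' direction I would split on the type of $\sigma$. Since $X\subsetneq E_r(X)=\sigma(X)$, iterating the identity $X=\sigma^{-1}(X)+B(r,0)$ gives $X=\sigma^{-k}(X)+B(kr,0)$ for all $k$, forcing $X=\R^n$ whenever $\sigma$ is an isometry with a fixed point, so that case is trivial. If $\sigma$ is a translation by a nonzero vector $v$, comparing support functions gives $\langle v,\nu\rangle=r$ for every $\nu\in\mathrm{dom}(h_X)\cap S^{n-1}$; this cannot hold at two antipodal directions (forcing $r=-r$), so $X$ has only one outward normal direction and must be a half-space, in which case the characterization is immediate. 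In the remaining case $\sigma$ has a fixed point $p$ and scale factor $\alpha\neq 1$; a preliminary iteration argument analogous to Lemma \ref{l.incout} shows $p\in\mathrm{int}(X)$ when $\alpha>1$ and $p\notin\overline X$ when $\alpha<1$ (in the decreasing case, $p\in X$ would force $X$ to contain the outward ray from $p$ through every one of its points, making $X$ a cone from $p$ and contradicting $\sigma(X)\neq X$). For each regular supporting half-space $H$ of $X$, $E_r(H)$ is a regular supporting half-space of $\sigma(X)$, hence equals $\sigma(H')$ for some regular supporting $H'$ of $X$; this yields the distance recurrence $d(H',p)=(d(H,p)+r)/\alpha$, a contraction toward $d^*=r/|\alpha-1|$. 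Combining this with the density argument of Observation \ref{o.dense} applied to the rotation part of $\sigma$, and the separating-hyperplane trick of Figure \ref{f.proofsep}, I would conclude that every regular supporting half-space of $X$ lying within any fixed finite distance from $p$ must lie at distance exactly $d^*$ from $p$.

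The main obstacle I anticipate is the distance-decreasing subcase $\alpha<1$, which does not appear directly in the erosion characterizations: $p$ lies outside $\overline X$, and sign conventions for ``distance from $p$'' in both the iteration and the separating-hyperplane arguments need to be handled carefully. The qualitative picture is the mirror image of the distance-increasing case---$X$'s regular supporting half-spaces all ``face away from'' $p$ at common distance $R=r/(1-\alpha)$, rather than surrounding $p$ as an interior center---but verifying that the iteration and density arguments transfer to this dual geometry is where the bulk of the remaining technical work lies.
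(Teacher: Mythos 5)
Your plan is a genuinely different route from the paper's, and it is much heavier. The paper does not redo the iteration/density/separation machinery for expansion: it notes that $e_r(E_r(X))=X$ for closed convex $X$, so the closure of a convex expansion-resilient set is erosion-resilient, and then simply invokes the already-proved trichotomy of Theorems \ref{t.dcoll}, \ref{t.inccoll} and \ref{t.isomcoll}. The single observation that collapses the trichotomy --- and it is the key idea missing from your proposal --- is that the boundary of $E_r(X)=X+B(r,0)$ is smooth, so \emph{every} supporting half-space of $E_r(X)$, and hence of $X=\sigma^{-1}(E_r(X))$, is regular; since a set with an exscribed ball, or with all regular supporting hyperplanes at a common dihedral angle to a fixed hyperplane, necessarily has non-regular supporting half-spaces unless it is a half-space, only the inscribed-ball case of Theorem \ref{t.dcoll} survives. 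That one remark replaces your entire case analysis for $\sigma$ an isometry and for $\alpha<1$, which is exactly where you concede the bulk of the technical work remains.

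The same missing observation is the source of the concrete gaps in what you wrote. In the ``if'' direction you assume only the \emph{regular} supporting half-spaces are at distance $R$ from $p$; that is strictly weaker than the theorem's hypothesis (note the paper's emphasized ``\emph{all}''), and under it your claim that $h_{E_r(X)}=h_{\sigma(X)}$ on all of $S^{n-1}$ is false: for an open square only the four side half-planes are regular and equidistant from the center, yet $E_r(\mathrm{square})$ is a rounded square, not a dilate. You need $h_X(\nu)-\langle p,\nu\rangle=R$ for every $\nu\in\mathrm{dom}(h_X)$, i.e.\ for every supporting half-space. Symmetrically, your ``only if'' argument ends with a statement about regular supporting half-spaces within a bounded region, which is not yet the theorem's conclusion about all of them. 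In the translation case, ruling out antipodal pairs of normals does not give ``only one outward normal direction''; you need that $\mathrm{dom}(h_X)$ is a convex cone, so that two distinct unit normals $\nu_1\neq\nu_2$ would force $\langle v,(\nu_1+\nu_2)/\lvert\nu_1+\nu_2\rvert\rangle=r$ as well, whence $\lvert\nu_1+\nu_2\rvert=2$ and $\nu_1=\nu_2$. And in the $\alpha<1$ case, ``$X$ is a cone from $p$, contradicting $\sigma(X)\neq X$'' is not the right contradiction, since the rotational part of $\sigma$ need not preserve a cone; the correct one is that $X+B(r,0)$ has a rounded apex and so is not a cone while $\sigma(X)$ is, and even after placing $p$ outside $\overline X$ you still must show the exscribed configuration with all half-spaces regular forces a half-space. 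Your support-function computation for the ``if'' direction, run with the corrected hypothesis, is clean and makes precise a step the paper leaves implicit.
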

\noindent For the purposes of Theorem \ref{t.expansion}, we say the the supporting half-spaces of an open convex set are the interiors of the supporting half-spaces of its closure.\\
\textbf{Proof of Theorem \ref{t.expansion}:}
  (\textit{Sketch})  Check that the closure of a convex expansion-resilient set is also erosion-resilient (since if $X$ is convex and closed, then we have $e_r(E_r(X))=X$).  Additionally, all of its supporting half-spaces must be regular (since this is true after an expansion).  By examining the characterizations given in Theorems \ref{t.inccoll} and \ref{t.isomcoll}, this rules out that a similarity transformation corresponding to the erosion of $X$ is either distance increasing or distance-preserving (unless $X$ is just a half-space, in which we are done); therefore, the characterization of Theorem \ref{t.dcoll} applies.  Since all of the supporting half-spaces to $X$ must be regular, this completes the proof.
\qed

\section{The nonconvex case}
\label{s.nonconvex}
We begin this section by giving examples of the `strange behaviors' nonconvex (and so unbounded) resilient sets can exhibit; in Section \ref{s.sic}, we show how these sets can be characterized.

\begin{example}
\label{e.path}
Our first example lies in $\R^1$.  It seems `fractal-like', and is resilient to erosion by arbitrarily large radii, even though every component of the set has finite diameter.  Moreover, the set $\{r_i\}$ of radii by which it is resilient to erosion is discrete.  As we will see in the next example, this kind of 1-dimensional resilient set can be used to create sets with similar properties in higher dimensions.

We find it easiest to describe the complement of the set. 
To this end, let 
$
T=\{4,-4,28,-28,196,-196,\ldots \}
$ 
be the set of integers of the form $\pm 4\cdot 7^k,$ $k\geq 0$, and let $A$ be the set of of all numbers $A$ for which 
\[
n=\sum_{t\in T_a} x
\]
 for some subset $T_a\sbs T$.  So, for example, $A$ includes the points 
$$0,\pm 4,\pm 24,\pm 28,\pm 32,\pm 164,\pm 168, \pm 172, \pm 192, \pm 196, \pm 200, \pm 220, \pm 224,\pm 228,$$
 \emph{etc.}  Let now $Y=E_1(A)$ be the expansion of $A$ by radius 1, and let $X=Y^C$ be the complement of $Y$.  Then $X$ is resilient to erosion by a radius $2\cdot 7^k$ for every $k\geq 0$.  The erosion by radius 2 is shown in Figure \ref{f.pathcoll}.

 \begin{figure}
   \begin{center}
     \includegraphics{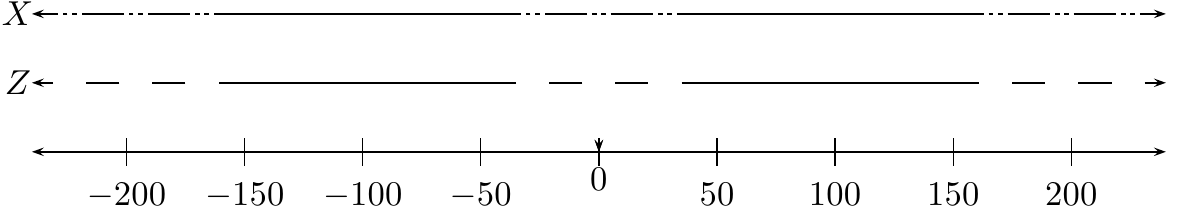}
   \end{center}
\caption{Shown are portions of $X$ and its erosion $Z=e_2(X)$ by the radius 2.\label{f.pathcoll}}
 \end{figure}

We will sketch the proof of this fact in terms of the expansion of $Y$, rather than the erosion of $X$, so we will prove that $E_2(Y)\sim Y$; in fact, that $E_2(Y)=7\cdot Y$, where $[\cdot]$ here indicates pointwise multiplication.  
This is equivalent to showing that $E_3(A)=7\cdot E_1(A)$.

Let $p\in E_1(A)$.  Then $d(p,a)<1$ for some 
\begin{equation}
a=\sum_{t\in T_a} t,
\label{aTa}
\end{equation}
where $T_a\sbs T$.  Multiplying by $7$, we get
that all points $p'$ of $7\cdot E_1(A)$ are within $<7$ units of a point $a'$ of the form 
\begin{equation}
  a'=\sum_{t\in T'} t
\label{a'T'}
\end{equation}
for some $T'\sbs 7\cdot T$.  But $T=(7\cdot T)\dot\cup \{4,-4\}$.
Thus we have that $p'$ must be within distance $<7-4=3$ of a point of $A$; thus we have that  $7\cdot E_1(A)\subseteq E_3(A).$

For the other direction, let now $p\in E_3(A)$.  So $d(p,a)<3$ for some $a$ satisfying line (\ref{aTa}) for some $T_a\sbs T$.  Thus $d(p,a')<7$ for some $a'$ satisfying line (\ref{a'T'}) for $T'\sbs T\stm \{4,-4\}=7\cdot T$.  Thus $\frac 1 7 p$ is at distance $<1$ from a point of $T$, and so we have shown that $E_3(A)\subseteq 7\cdot E_1(A)$.  Combining the two directions, we have $E_3(A)=7\cdot E_1(A)$, as desired.

\begin{figure}[p]
  \begin{center}
    \includegraphics{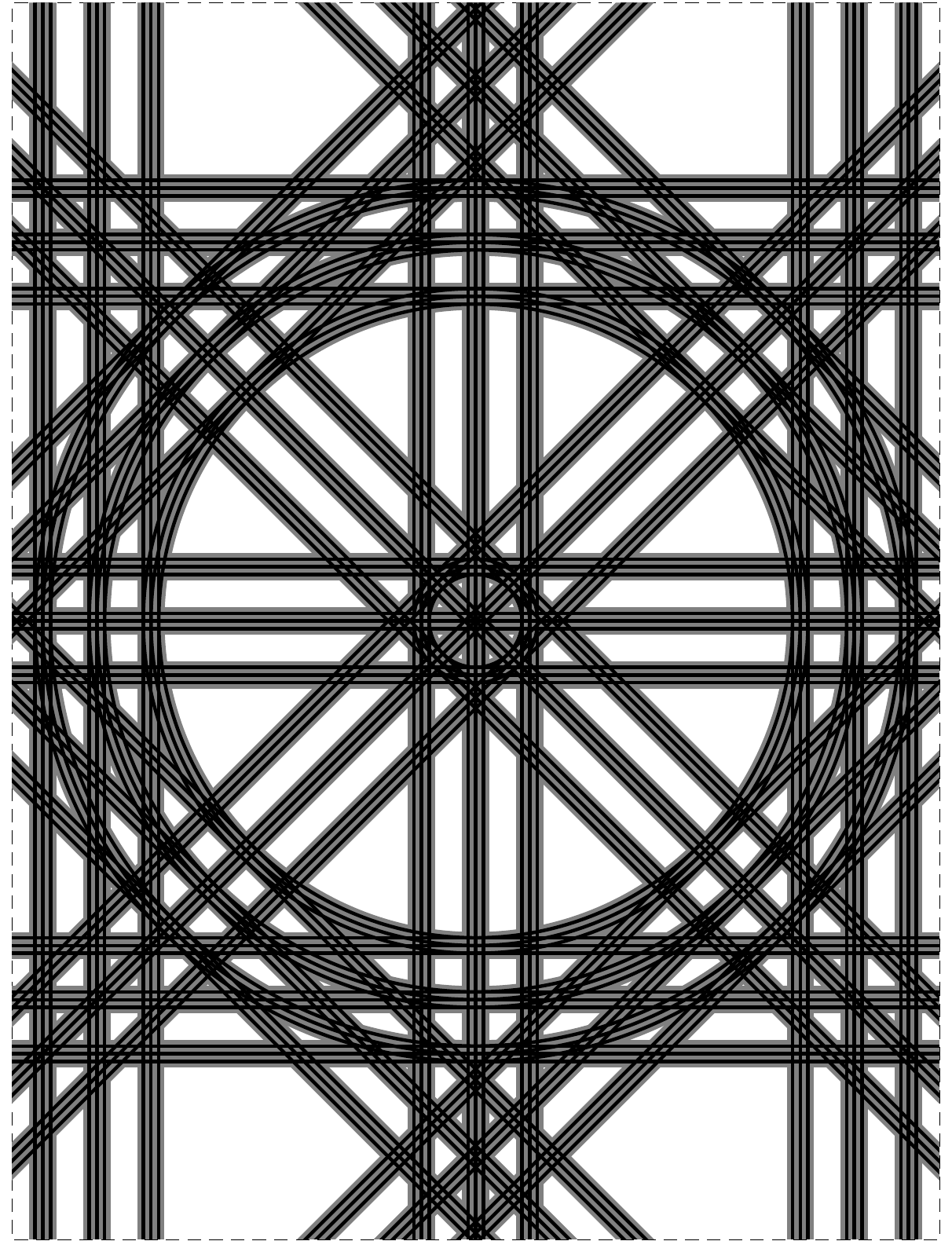}
  \end{center}
  \caption{An example of a fractal-like unbounded expansion-resilient set from $\R^2$---it is cropped at the dashed box.  Since it is expansion-resilient, its complement is erosion-resilient.  The gray area is the region added to the set upon an expansion by the smallest radius by which it is expandable.  
The corresponding similarity transformation is a distance-increasing homothety fixing the center of the region shown.\label{plaid}}
\end{figure}
\end{example}

\begin{example}
\label{e.int}
Recall that the erosion of an intersection is the intersection of the erosions.  Thus, taking the intersection of sets $X_i$ all satisfying $\sigma(X_i)=e_r(X_i)$ for fixed $r,\sigma$, gives another resilient set.  Equivalently, we can make examples of expandable sets from other suitable expandable sets by taking unions instead of intersections.  Using this basic method, we can create in $\R^n$ diverse classes of examples of unbounded sets displaying the same properties as the example we have given in $\R^1$.   See for example Figure \ref{plaid}, which illustrates an unbounded expandable set in $\R^2$ constructed from four rotated copies of the product $Y\times \R$, together with the set $\{z\in \R^2\textrm{ s.t. } \lVert z \rVert \in Y\}$, a circular variant of $Y$.
\end{example}

\begin{example}
\label{e.spiral}
This example is based on the classical logarithmic spiral, given in polar coordinates by $R(\theta)=ae^{b\theta}$.  The logarithmic spiral has the remarkable property that it is self-similar  under similarity transformations (centered at the origin) which take on all scaling factors $\alpha>0$; the rotation in the similarity transformation varies continuously with the choice of $\alpha$.

To get our example, we modify this spiral by giving it a `truncated' logarithmic thickness: more precisely, let $R_0(\theta)=ae^{b\theta}$ and $T_0(\theta)=e^{b\theta}-1$, and let 
\begin{equation}
  S_1=\bigcup_{\theta}\bar B(T_0(\theta),(R_0(\theta);\theta)),
\label{defS0}
\end{equation}
where $\bar B(T_0(\theta),(R_0(\theta);\theta))$ is the closed ball of radius $T_0(\theta)$ about the point $(R_0(\theta);\theta)$, given in polar coordinates.  Then $S_1$ is resilient to erosion by \emph{any} radius.  In general, the similarity transformation includes a rotation which varies continuously with the choice of the radius of erosion.  ($S_1$ is shown in Figure \ref{f.spiral}, with two of its erosions).  The proof of the resiliency of $S_1$ will follow from our characterization of resilient sets in the next section.
\end{example}

%
\begin{figure}
\begin{center}
  \includegraphics{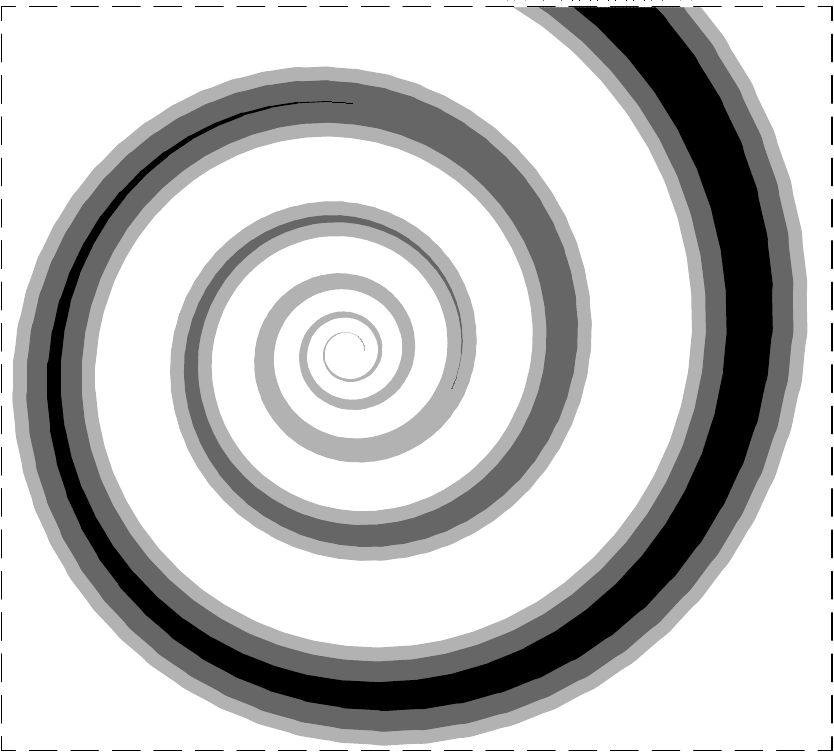}
\end{center}
  \caption{$S_r$ is a logarithmic spiral with `truncated' logarithmic thickness, and is resilient to erosion by any radius; the corresponding similarity transformation includes a rotation by an angle determined by the radius of erosion.\label{f.spiral}  (Two arbitrary erosions are shown.)}
  
\end{figure}



\begin{example}
\label{x.discspiral}
Fix some similarity transformation $\sigma$ centered at the origin which increases distances and includes an irrational rotation. Fix the body $T$ of some rectangle, and let $Q_0$ be the union of all the images $\sigma^i(T)$, over all integers $i$.  Then the erosion $Q_r=e_r(Q_0)$, shown in Figure \ref{f.discspiral}, is resilient to erosion by a discrete set of radii, starting with the radius $r(\alpha-1)$; each erosion induces a similarity transformation which includes an irrational rotation.  Again, the proof will follow from Theorem \ref{t.sic}.
\end{example}


\begin{figure}
  
  \begin{center}
    \includegraphics{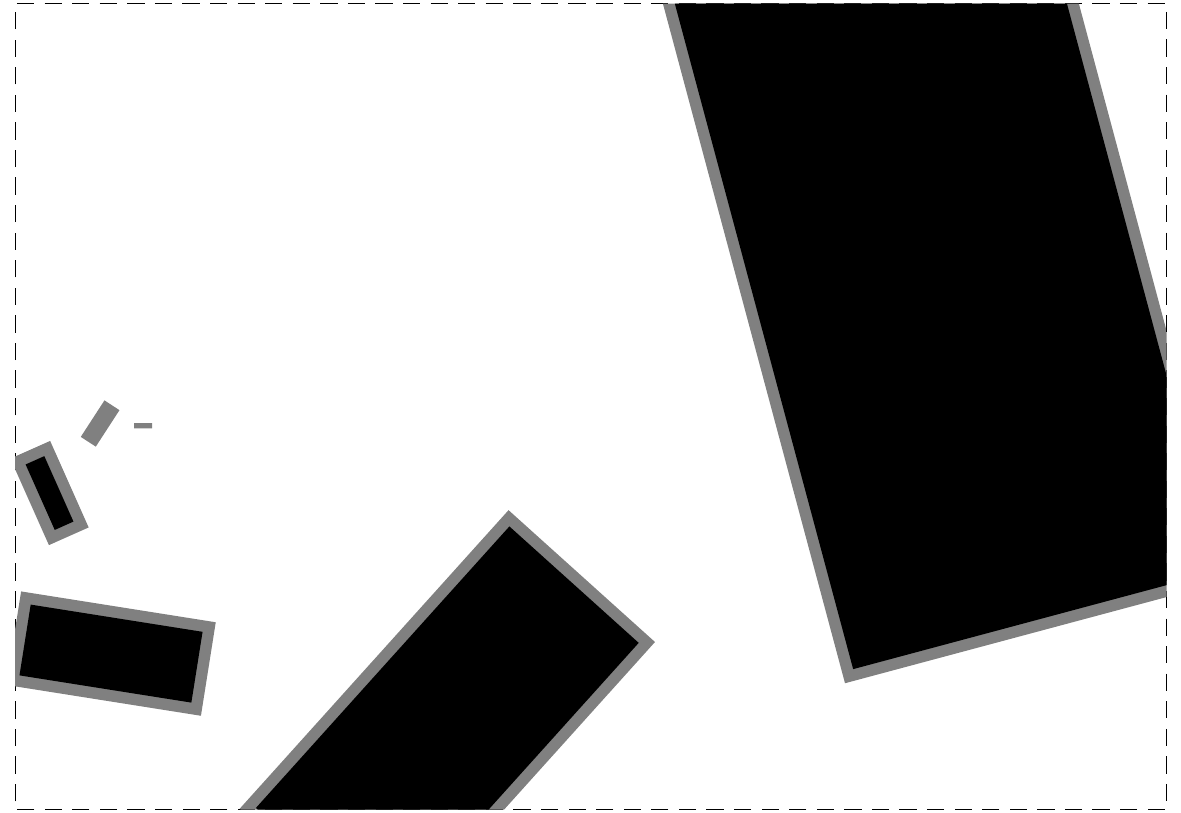}
  \end{center}

  \caption{\label{f.discspiral}$Q_r$ is a discrete `spiraling' resilient set.  It is shown with its \emph{second} erosion in black.}
\end{figure}


\subsection{Characterizing nonconvex resilient sets}
\label{s.sic}

To finish our characterization of resilient sets, we need to cover nonconvex sets; by Lemma \ref{l.convex}, it is enough to cover the case where the similarity transformation corresponding to erosion is distance-increasing.  In light of the examples in the previous section, it is clear that the characterization for this case will not be of the simple geometric type given for the distance decreasing and isometric cases.  Instead, the characterization will show that any resilient set whose corresponding similarity transformation is distance increasing can be constructed essentially in the same way as we constructed Example \ref{x.discspiral}, and moreover, all such constructions lead to resilient sets.  Let's make this more precise:
\begin{definition}
  A set $W\sbs \R^n$ is \emph{scale-invariant} if we have $W=\sigma(W)$ for a similarity transformation $\sigma$ with scaling factor $\alpha>1$.
\label{d.si}
\end{definition}
Notice now that $Q_0$ from Example \ref{x.discspiral} is a scale-invariant set, thus $Q_r$ was constructed as the erosion of a scale-invariant set.  This feature characterizes all sets which are resilient under distance-increasing similarity transformations:
\begin{theorem}
  A set $X$ satisfies $e_r(X)=\sigma(X)$ for some distance-increasing similarity transformation $\sigma$ if and only if we have $X=e_{r/(\alpha-1)}(W)$ for some set $W$ which is scale-invariant under $\sigma$.
\label{t.sic}
\end{theorem}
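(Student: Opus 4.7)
Set $s = r/(\alpha - 1)$, so that $r + s = \alpha s$.

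\emph{Easy direction ($\Leftarrow$).} Given $W$ scale-invariant under $\sigma$, set $X := e_s(W)$. My plan is a one-line verification using Observation \ref{o.commutes} together with the additivity of erosion $e_a(e_b(\cdot)) = e_{a+b}(\cdot)$ (implicit in the calculation preceding Observation \ref{sup}; it follows from line (\ref{crX}) via the triangle inequality):
\[
\sigma(X) = \sigma(e_s(W)) = e_{\alpha s}(W) = e_{r+s}(W) = e_r(e_s(W)) = e_r(X).
\]

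\emph{Hard direction ($\Rightarrow$).} My candidate is
\[
W := \bigcup_{k \geq 0} \sigma^{-k}(X).
\]
Since $\sigma(X) = e_r(X) \subseteq X$ the sets $\sigma^{-k}(X)$ form an increasing chain, and a trivial shift of index yields $\sigma(W) = W$. For the inclusion $X \subseteq e_s(W)$, I would use Observation \ref{sup} combined with Observation \ref{o.commutes} to write $X = e_{r_k/\alpha^k}(\sigma^{-k}(X))$ where $r_k := r(\alpha^k - 1)/(\alpha - 1)$; hence for each $y \in X$, the ball $B(r_k/\alpha^k, y)$ lies inside $\sigma^{-k}(X) \subseteq W$. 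Since $r_k/\alpha^k = s(1 - \alpha^{-k})$ increases to $s$, taking the union over $k$ yields $B(s, y) \subseteq W$.

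\emph{Reverse inclusion.} To conclude $e_s(W) \subseteq X$, I would suppose for contradiction that $y \in e_s(W) \setminus X$ and hunt for a point of $W^C$ within distance $< s$ of $y$. Using $\sigma(e_s(W)) = e_{\alpha s}(W) \subseteq e_s(W)$ to shift along the forward orbit of $y$, I can reduce to $\sigma(y) \in X$, whence $\sigma(y) \in X \setminus e_r(X)$ and there exists $u \in X^C$ with $d(\sigma(y), u) < r$. Because $\sigma(X) \subseteq X$ forces $\sigma^{-1}(X^C) \subseteq X^C$, the pullback $\sigma^{-1}(u)$ lies in $X^C$ at distance $< r/\alpha < s$ from $y$. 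If $u$'s forward orbit stays entirely in $X^C$, then $u \in W^C$, so $\sigma^{-1}(u) \in W^C$ by scale-invariance of $W^C$, and we are done. Otherwise $u \in W \cap X^C$; I would then verify that in fact $u \in e_s(W)$ (using $B(\alpha s - r, u) \subseteq B(\alpha s, \sigma(y)) \subseteq W$ together with $\alpha s - r = s$), and iterate the same construction with $u$ playing the role of $y$.

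The main obstacle I anticipate is controlling this iteration: either showing it must terminate after finitely many steps (yielding the $W^C$-proximity contradiction at some step, which pulls back to a contradiction for the original $y$), or combining it with a limiting/compactness argument that exploits contraction of $\sigma^{-1}$ toward its unique fixed point $p$. Here $p$ must lie in $X^C$ (otherwise $p = \sigma^k(p) \in \sigma^k(X) = e_{r_k}(X)$ for all $k$ forces $X = \R^n$), hence $p \in W^C$, and I would use the backward orbit $\sigma^{-j}(u) \to p$ in $W \cap X^C$ together with the scaled balls $B(s/\alpha^j, \sigma^{-j}(\cdot)) \subseteq W$ to locate a $W^C$-point close enough to some point along the iteration to contradict its membership in $e_s(W)$.
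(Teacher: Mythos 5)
Your easy direction, your choice of $W=\bigcup_{k\ge 0}\sigma^{-k}(X)$ (which equals the paper's $\bigcup_{k\ge 1}\sigma^{-k}(X)$, since the chain is increasing), and your proof of $X\subseteq e_s(W)$ via the radii $s(1-\alpha^{-k})$ all coincide with the paper's argument and are correct. The genuine gap is exactly where you flag it: the reverse inclusion $e_s(W)\subseteq X$. Your iteration $y\mapsto u\mapsto\cdots$ has no termination guarantee and no contraction: each new witness lies within $r$ of a \emph{forward} image of the previous point and is then pushed further forward by the reduction to ``$\sigma(\cdot)\in X$,'' so the step sizes do not decay and the sequence neither converges nor visibly exhausts anything. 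Moreover, the fallback you sketch (pulling back along $\sigma^{-j}$ toward the fixed point $p\in W^C$) cannot close the argument: from $u\in e_s(W)$ one gets $\sigma^{-j}(u)\in e_{s/\alpha^j}(W)$, i.e.\ $d(\sigma^{-j}(u),W^C)\ge s/\alpha^j$, while $d(\sigma^{-j}(u),p)=d(u,p)/\alpha^j$; both quantities scale by the same factor $\alpha^{-j}$, so a contradiction materializes only if $d(u,p)<s$ held already, which there is no reason to expect.

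The paper closes this direction with a different, direct construction. It suffices to show that every $y_0\in X^C$ lies at distance $<s$ from $W^C$. Since $X=e_{r/\alpha}(\sigma^{-1}(X))$ (Observation \ref{supfrom}), a point $y_0\notin X$ admits a witness $y_1\notin\sigma^{-1}(X)$ with $d(y_0,y_1)<r/\alpha$; since $\sigma^{-1}(X)=e_{r/\alpha^2}(\sigma^{-2}(X))$, there is $y_2\notin\sigma^{-2}(X)$ with $d(y_1,y_2)<r/\alpha^2$; and so on. The steps decrease geometrically, so $(y_i)$ is Cauchy with limit $y_\infty$ satisfying $d(y_0,y_\infty)<\sum_{k\ge1}r/\alpha^k=s$, and $y_\infty\notin W$: if $y_\infty\in\sigma^{-i}(X)=e_{r_{-j}-r_{-i}}(\sigma^{-j}(X))$, then $y_\infty$ would be at distance $\ge r_{-j}-r_{-i}$ from $y_j\notin\sigma^{-j}(X)$, contradicting $d(y_j,y_\infty)\to0$ while $r_{-j}-r_{-i}\to s-r_{-i}>0$. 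The idea your iteration is missing is precisely this geometric decay of the witness steps, which is what makes the limit exist and certifiably land in $W^C$ within distance $s$ of the original point.
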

In particular, note that the spiral from Figure \ref{f.spiral} is the erosion of a scale-invariant logarithmic spiral $S_0$ with thickness $T_0(\theta)=e^{b\theta}$.  Theorem \ref{t.sic} also implies that Examples \ref{e.path} and \ref{e.int} are the erosions of scale-invariant sets.  In these cases, the original self-similar sets are dense in $\R^1$ and $\R^2$, respectively.   Notice that Theorem \ref{t.sic} overlaps with Theorem \ref{t.inccoll} in its coverage of convex sets resilient to erosion with corresponding distance-increasing similarity transformations.  For example, in the case of the resilient set $X_3$ (Figure \ref{badtent}) discussed in Section \ref{R3counter}, the corresponding scale-invariant set is the set $X_2$ (Figure \ref{ibltetra}).

For the proof of Theorem \ref{t.sic}, let's first see why the `if' direction is true.  We have that $W=\sigma(W)$, where $\sigma$ has scaling factor $\alpha>1$.  Applying Observation \ref{o.commutes} with some radius  $r'$ to the set $W$ gives us that $\sigma(e_{r'}(W))=e_{\alpha r'}(\sigma(W))$.  Thus $e_{\alpha r'}(W)=\sigma(e_{r'}(W))$.  But we have (by the triangle inequality) that $e_{\alpha r'}(W)=e_{r'(\alpha-1)}(e_{r'}(W))$; thus we have that
$e_{r'(\alpha-1)}(e_{r'}(W))=\sigma(e_{r'}(W)).$
Thus, letting $r=r'(\alpha-1)$, we have that the set $X=e_{r'}(W)$ is resilient to erosion by the radius $r$.

For the other direction, we need to first construct the set $W$ which should be scale invariant and give rise to $X$ under erosion.  Observe that since $X$ is similar to $e_r(X)$,  Observation \ref{o.commutes} gives us that $X=e_{r/\alpha}(\sigma^{-1}(X))$.  In fact, we have the following observation, similar to Observation \ref{sup}:
\begin{observ}
  If $e_r(X)=\sigma(X)$ for some similarity transformation $\sigma$ with scaling factor $\alpha$,  then for all $i\geq 1$ and 
\begin{equation}
r_{-i}=\sum\limits_{1\leq k\leq i} \frac r {\alpha^k},
\label{e.rmis}
\end{equation}
we have $X=e_{r_{-i}}(\sigma^{-i}(X))$.  \qed
\label{supfrom}
\end{observ}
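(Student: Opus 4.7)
The plan is to mirror the proof sketch of Observation \ref{sup}, but running $\sigma$ backwards: I would replace $\sigma$ by its inverse $\sigma^{-1}$, which is itself a similarity transformation with scaling factor $1/\alpha$, and apply Observation \ref{o.commutes} in that setting. First I would handle the case $i=1$. Applying $\sigma^{-1}$ to both sides of the hypothesis $e_r(X)=\sigma(X)$ gives $\sigma^{-1}(e_r(X))=X$, and by Observation \ref{o.commutes} applied to $\sigma^{-1}$ (scaling factor $1/\alpha$) the left side equals $e_{r/\alpha}(\sigma^{-1}(X))$. Hence
\[
X=e_{r/\alpha}(\sigma^{-1}(X)),
\]
which is precisely the desired identity in the case $i=1$ since $r_{-1}=r/\alpha$.

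For the inductive step, I would apply $\sigma^{-1}$ once more to this identity and again invoke Observation \ref{o.commutes} to get $\sigma^{-1}(X)=e_{r/\alpha^2}(\sigma^{-2}(X))$, and more generally $\sigma^{-(i-1)}(X)=e_{r/\alpha^i}(\sigma^{-i}(X))$ for every $i\geq 1$. Assuming the inductive hypothesis $X=e_{r_{-(i-1)}}(\sigma^{-(i-1)}(X))$, I would substitute to obtain
\[
X=e_{r_{-(i-1)}}\bigl(e_{r/\alpha^i}(\sigma^{-i}(X))\bigr).
\]
The identity $e_s(e_t(Y))=e_{s+t}(Y)$, which holds for all $s,t\geq 0$ by the triangle inequality (and which was already used implicitly in Observation \ref{sup}), then collapses the two nested erosions into a single erosion of radius $r_{-(i-1)}+r/\alpha^i$. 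Since $r_{-(i-1)}+r/\alpha^i=\sum_{k=1}^{i}r/\alpha^k=r_{-i}$, this is exactly $X=e_{r_{-i}}(\sigma^{-i}(X))$, completing the induction.

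There is no substantive obstacle here: the argument is the time-reversed mirror of Observation \ref{sup}, with the geometric sum $\sum_{k=0}^{i-1}r\alpha^k$ replaced by the geometric sum $\sum_{k=1}^{i}r/\alpha^k$, and the only point that deserves a second look is that $\sigma^{-1}$ really is a similarity with scaling factor $1/\alpha$, so that the radius adjustment in Observation \ref{o.commutes} comes out to $r/\alpha$ rather than $\alpha r$.
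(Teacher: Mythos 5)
Your proof is correct and matches the paper's intended argument: the paper justifies Observation \ref{supfrom} by exactly this route, noting just before the statement that Observation \ref{o.commutes} applied to $\sigma^{-1}$ yields $X=e_{r/\alpha}(\sigma^{-1}(X))$ and then iterating as in Observation \ref{sup}, using the semigroup identity $e_s(e_t(Y))=e_{s+t}(Y)$ to sum the radii. Nothing further is needed.
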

\noindent Motivated by this observation, we set $W$ as
\begin{equation}
W=\bigcup_{k\geq 1} \sigma^{-k}(X);
\label{e.W}
\end{equation}
note that $W$ is thus a scale-invariant set: we have $\sigma(W)=W$.

To complete the proof, it just remains to check that $e_{r/(\alpha-1)}(W)=X$.  Note that $\frac r {\alpha-1}=\sup_{(i\geq 1)} r_{-i}$, thus we certainly have that $e_{r/(\alpha-1)}(W)\supset X$; otherwise, some point of $X$ must be at distance $r_0<\frac r {\alpha-1}$ from the complement of $W$.  We would then have for some sufficiently large $i$ that $r_0<r_{-i}$, a contradiction since $e_{r_{-i}}(\sigma^{-i}(X))=X$, yet $\sigma^{-i}(X)\sbs W$.

It remains to show that $e_{r/(\alpha-1)}(W)\sbs X$.  It suffices to show that any point in the complement $X^C$ is at distance $< \frac r {\alpha -1}$ from the complement $W^C$.

Since $X=e_{r_{-1}}(\sigma^{-1}(X))$, we have by the definition of the erosion operation that any point $y_0\in X^C$ must lie at distance $<\frac r \alpha$ from some point $y_1$ in the complement of $\sigma^{-1}(X)$.  Similarly, $y_1$ must lie at distance $<\frac r {\alpha^2}$ from a point $y_2$ in the complement of $\sigma^{-2}(X)$.  Continuing in this manner, we get a sequence $\{y_i\}_{i\geq 0}$ where, for each $i$, $y_i$ lies outside of $\sigma^{-i}(X)$, and the distance $d(y_{i-1},y_i)$ is less than $\frac r {\alpha^i}$.  Since $\alpha>1$, this is a Cauchy sequence, and there is some limit point $y_\infty.$  We have that
\[
d(y_0,y_\infty)<\sum_{k\geq 1}\frac{r} {\alpha^k}=\frac r {\alpha-1},
\]
thus it just remains to check that $y_{\infty}$ does not lie in $W$.  If it does, then by line (\ref{e.W}) we have that $y_\infty\in \sigma^{-i}(X)$ for some $i$.  But then, for some sufficiently large $j$, we have that $r_{-j}-r_{-i}>d(y_j,y_\infty)$, a contradiction since $e_{r_{-j}-r_{-i}}(\sigma^{-j}(X))=\sigma^-i(X)$, and $y_j\notin \sigma^{-j}(X)$.\qed

\subsection{Fractals and erosion}
\label{s.fractals}
Theorem \ref{t.sic} shows a direct correspondence between scale-invariant sets and  resilient sets with corresponding distance-increasing similarity transformations.  Although there is a connection between fractals and scale-invariant sets, these concepts are certainly not the same.  The family of scale-invariant sets includes sets such as lines, half-spaces, and the figure $X_2$ from Figure \ref{ibltetra} which are certainly not `fractals'.  Moreover, the definition of a fractal given in \cite{mandel} by Mandelbrot---a set whose Hausdorff dimension is strictly less than than its covering dimension---includes many sets which are not actually `self-similar' in any exact sense.

Nevertheless, an important class of fractals (including well-known examples such as Koch's snowflake and Sierpi\'nski's triangle) is that of fractals generated by `Iterated Function Systems'.  Our goal in this section is to briefly point out how members of this class can give rise to scale-invariant sets.  For more background on Iterated Function Systems, see, for example, \cite{fg}.

An \emph{Iterated Function System} is a list $f_1,f_2,\dots f_t$ of distance-decreasing similarity transformations of $\R^n$.  Its \emph{ratio list} is the list $\alpha_1, \alpha_2,\dots,\alpha_t$ of scaling factors of the similarity transformations.  Any iterated function system has a unique nonempty compact \emph{invariant} set $K$ satisfying
\begin{equation}
  K=f_1(K)\cup f_2(K)\cup \cdots \cup f_t(K).
\end{equation}
The \emph{similarity dimension} of the iterated function system is the solution $s$ to the equation
\begin{equation}
  \sum_{i=1}^t a_i^s=1.
\end{equation}
We will see shortly that under a certain condition, the similarity dimension is the same as the Hausdorff dimension, so that $K$ will be a fractal in the sense of Mandelbrot's definition so long as $s$ is strictly less than the covering dimension (in particular, if it is not an integer).  Note that Sierpi\'nski's triangle is the invariant set of an iterated function system with ratio list $(\frac 1 2, \frac 1 2, \frac 1 2)$ and so has similarity dimension $\frac {\log 3}{\log 2}$.  Koch's curve is the invariant set of an iterated function system with ratio list $(\frac 1 3, \frac 1 3, \frac 1 3, \frac 1 3)$, thus has similarity dimension $\frac {\log 4}{\log 3}$.

We are trying to show how to get scale-invariant sets from fractals generated by an iterated function system.  Of course, given \emph{any} set $X$ and a similarity transformation $\sigma$, the set 
\[
SI_\sigma(X)=\bigcup_{k\in \Z}\sigma^k(X)
\]
is a scale-invariant set.  In general, however, $SI_\sigma(X)$ may not seem very related to the original set $X$; it may very well turn out to be the whole space, for example.

In the case of the invariant set of an iterated function system where we let $\sigma=f_1$, say, we have that 
\begin{equation}
  SI_{f_1}(K)=\bigcup_{k\in \Z}f_1^{k}(K)=\bigcup_{k\in \Z^-}f_1^{k}(K)=K\cup \bigcup_{\substack{k\geq 0\\ 2\leq i\leq k}}f_1^{-k}\circ f_i(K),
\end{equation}
suggesting that $SI_{f_1}(K)$ will retain some of the appearance of the set $K$.  The problem is that, in general, $K$ and
\[
N(K)=\bigcup_{\substack{k\geq 0\\ 2\leq i\leq k}}f_1^{-k}\circ f_i(K)
\]
may overlap, thus we are not guaranteed that $SI_{f_1}(K)$ shares the `structure' of $K$.  The \emph{open set condition} (OSC) on iterated function systems, introduced by Moran \cite{osc}, is a condition which controls the extent of this overlap: it requires that there is an open set $U$ with $f_i(U)\sbs U$ for all $1\leq i\leq k$, and $f_i(U)\cap f_j(U)=\varnothing$ for $i\neq j$.  One important consequence of the open set condition is that the similarity dimension of the iterated function system must coincide with the Hausdorff dimension of the corresponding invariant set; in particular, this allows the dimension to be easily computed in many cases (OSC is satisfied for the well-known examples of mathematical fractals, such as Sierpi\'nski's Triangle and the Koch curve).  A recent result of Bandt, et al. (Corollary 2 in \cite{conosc}) implies that if an iterated function system satisfies the OSC, then there is an open set $V$ which intersects the invariant set $K$ nontrivially and is disjoint from $N(K)$; thus the intersection $V\cap K$ will be identical to the intersection $V\cap SI_{f_1}(K)$.  Thus OSC ensures that $K$ and $SI_{f_1}(K)$ share the same small-scale structure, in a certain sense.  

The preceding remarks imply that any fractal coming from an iterated function system satisfying the open set condition gives rise to a scale-invariant set which retains the small-scale structure of the original fractal.  Figure \ref{f.iST} shows a portion of the scale-invariant extension of Sierpi\'nski's Triangle, and Figure \ref{f.koch} shows a portion of the scale-invariant extension of the Koch curve.  Since as fractals these sets have empty interior, it is necessary to take complements before taking the erosion to get a nonempty resilient set.  Figure \ref{f.Sr} shows the resulting resilient set produced from Sierpi\'nski's Triangle.  In the case of the unbounded version of Koch's curve, the complement consists of two components, which are each scale-invariant and thus can separately give rise to scale-invariant sets.  Figure \ref{f.ckoch} shows the resilient set produced by taking the erosion of one of these components.

\psset{unit=.75cm}
\begin{figure}
  \subfigure[\label{f.iST} Sierpi\'nski's Triangle can be extended ad-infinitum to create a scale-invariant set.]{
\includegraphics{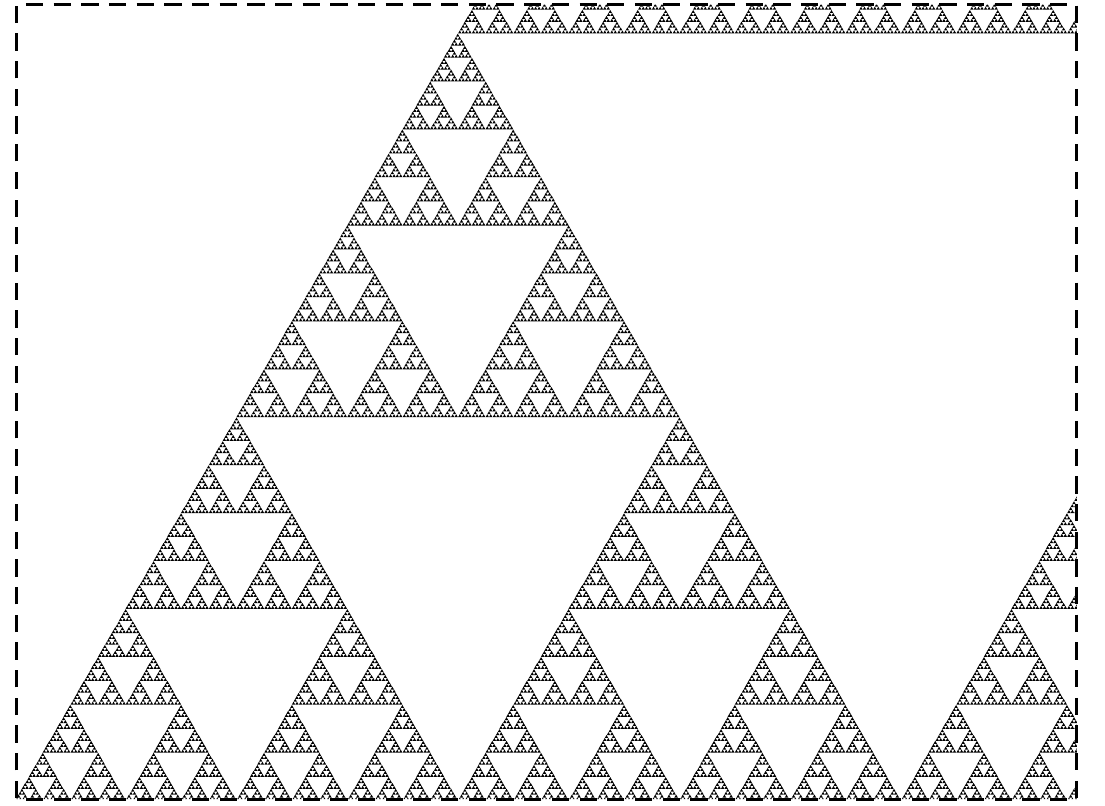}
}

  \subfigure[\label{f.Sr}$S_r=e_r(S)$ is the erosion of the complement of the unbounded version of Sierpi\'nski's Triangle, and is resilient to erosion by the radius $r$.  The area removed by its first erosion is shown in gray.]{
\includegraphics{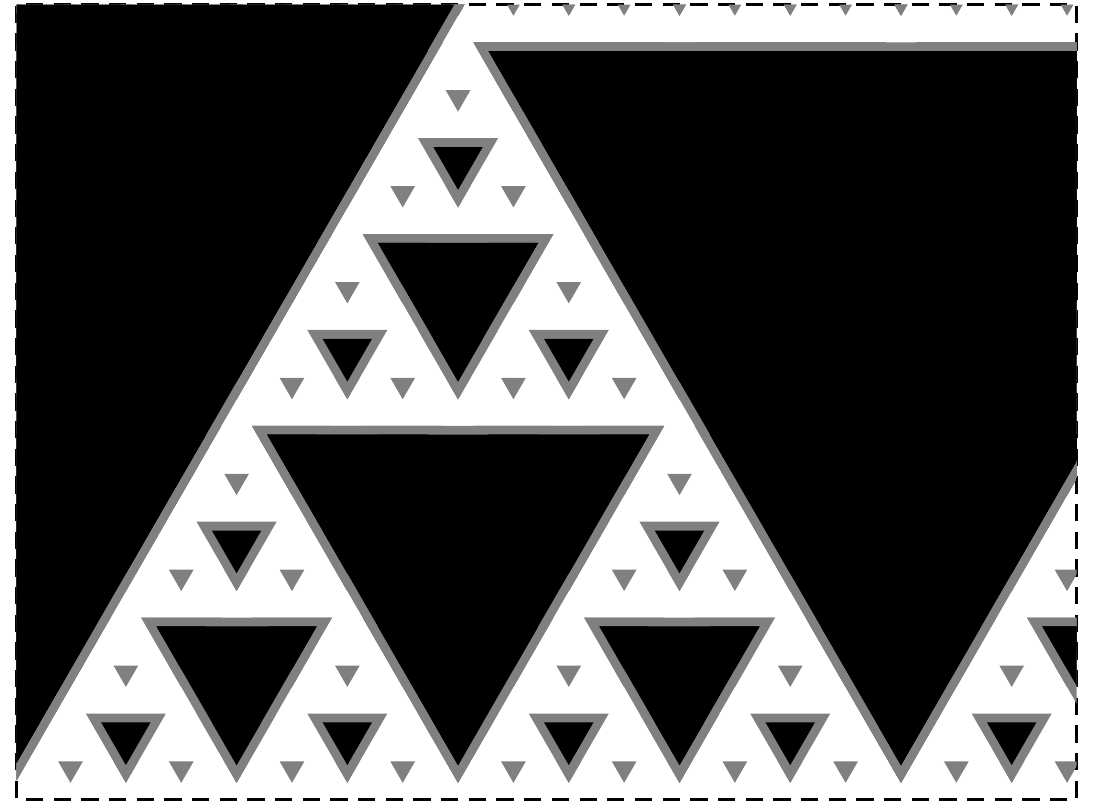}
}

\caption{Getting a resilient set from a fractal.}
\end{figure}

\section{Further Questions}
\label{s.Q}

Since the concepts of erosion, expansion, and similarity make sense in an arbitrary metric space, the question of resiliency could be studied in a wide range of settings. (Note that our proof of Theorem \ref{t.sic} is actually valid in any complete metric space.) 

There is a natural line of inquiry in Euclidean space suggested by Theorem \ref{t.sic}, however.  Since this theorem makes its characterization in terms of scale-invariant sets, it is natural to wonder about the `behavior' of such sets.  In this case, a natural line of attack seems to be from the standpoint of their transformation groups.  Define the \emph{self-similarity group} of a set $X$ as the group of similarity transformations $\sigma$ satisfying $X=\sigma(X)$.  What can we say about which groups appear in this way?

The case of isometric transformation groups  has received considerable attention because of its applications in crystallography (see \emph{e.g.,} \cite{3dsg}, \cite{4dsg}).  Because of the application, attention is restricted in that case to discrete groups.  For our application to resilient sets, there are relevant scale-invariant sets like the thickened logarithmic spiral $S_0$ whose self-similarity groups really are not discrete.  Nevertheless, the following question puts a reasonable restriction on the self-similarity groups, which is necessary anyway for the set to give rise to nonempty resilient sets:

\begin{question}
  Which groups  of similarity transformations occur as the self-similarity groups of subsets of $\R^n$ with nonempty interior?
  \label{igroupq}
\end{question}
\noindent It seems likely that Question \ref{igroupq} is interesting even for small values of $n$.

\end{document}